\documentclass[reqno]{amsart}
\usepackage{graphics}
\usepackage{latexsym}
\usepackage[dvips]{epsfig}
\usepackage{color}
\usepackage{graphicx}
\usepackage{amsmath}
\usepackage{amssymb}
\usepackage{amsfonts}
\usepackage{eufrak}
\usepackage{amstext}
\usepackage{amsopn}
\usepackage{amsbsy}
\usepackage{amscd}
\usepackage{amsxtra}
\usepackage{amsthm}
\usepackage{bm}
\usepackage{CJK}

\newcommand{\R}{{\mathbb R}}

\newcommand{\beq}{\begin{equation}}
\newcommand{\eeq}{\end{equation}}
\newcommand{\ben}{\begin{eqnarray}}
\newcommand{\een}{\end{eqnarray}}
\newcommand{\beno}{\begin{eqnarray*}}
\newcommand{\eeno}{\end{eqnarray*}}

\newtheorem{thm}{Theorem}[section]

\newtheorem{lem}[thm]{Lemma}
\newtheorem{prop}[thm]{Proposition}
\newtheorem{coro}[thm]{Corollary}
\newtheorem{rmk}[thm]{Remark}

\allowdisplaybreaks

\setcounter{equation}{0}



\begin{document}

\title[Analysis of Ruptures]{Qualitative Analysis of Rupture Solutions for an MEMS Problem}
\author[J. D\'avila]{Juan D\'avila}
\address{\noindent J. D\'avila -
Departamento de Ingenier\'{\i}a Matem\'atica and CMM, Universidad de
Chile, Casilla 170 Correo 3, Santiago, Chile.}
\email{jdavila@dim.uchile.cl}

\author[K. Wang]{ Kelei Wang}
 \address{\noindent K. Wang-
 Wuhan Institute of Physics and Mathematics,
The Chinese Academy of Sciences, Wuhan 430071, China.}
\email{wangkelei@wipm.ac.cn}

\author[J. Wei]{Juncheng Wei}
\address{\noindent J. Wei -
Department Of Mathematics, Chinese University Of Hong Kong, Shatin,
Hong Kong and Department of Mathematics, University of British
Columbia, Vancouver, B.C., Canada, V6T 1Z2. }
\email{wei@math.cuhk.edu.hk}

\begin{abstract}
We prove a sharp H\"older continuity estimates of rupture sets  for
sequences of solutions of the following nonlinear problem with
negative exponent
$$ \Delta u= \frac{1}{u^p}, \ p>1, \ \mbox{in} \ \Omega .$$
As a consequence, we prove the existence of rupture solutions with isolated ruptures in a bounded convex domain in $\R^2$.

\end{abstract}

\keywords{Semilinear elliptic equations with negative power, H\"older continuity, Monotonicity Formla}

\subjclass{}

\maketitle

\date{}

\section{The setting and main results}
\setcounter{equation}{0}

Of concern is the following MEMS problem in a bounded domain $ \Omega \subset \R^n$
\begin{equation}\label{equation}
\Delta u=u^{-p} \ \mbox{in} \ \ \Omega
\end{equation}
where $p>1$.

Problem (\ref{equation}) arises in modeling   an electrostatic Micro-Electromechanical System (MEMS) device.  We refer to the books by Pelesko-Bernstein \cite{pelesko-bernstein} for physical derivations and Esposito-Ghoussoub-Guo \cite{esposito-ghoussoub-guo2} for mathematical analysis.

 Of special interest are solutions that give rise to  singularities in the equation, that is such that $u \approx 0$ in some region, which in the physical model represents a {\bf rupture} in the device. The main result of this paper is to give a sharp estimate on the H\"older continuity of solutions near the ruptures and estimates on Hausdorff dimensions of such rupture sets under  natural energy assumptions.

We now state our main results. Let $u_i$ be a sequence of positive solutions to \eqref{equation} in $B_2(0)$,
satisfying
\begin{equation}\label{energy bound}
\sup_i\int_{B_2(0)}|\nabla u_i|^2+u_i^{1-p}+u_i^2=M<+\infty.
\end{equation}
Here $B_2(0)\subset\mathbb{R}^n$ is the open ball of radius $2$.
\begin{thm}\label{thm convergence}
\

\begin{itemize}
\item $u_i$ are uniformly bounded in $C^{\frac{2}{p+1}}(\overline{B_1})$;
\item Up to subsequence, $u_i$ converges uniformly to $u_\infty$ in $B_1$, strongly in
$H^1(B_1)$, and $u_i^{-p}$ converges to $u_\infty^{-p}$ in
$L^{1}(B_1)$;
\item Outside $\{u_\infty=0\}$, $u_i$ converges to
$u_\infty$ in any $C^k$ norm;
\item $u_\infty$ is a {\em stationary} solution of \eqref{equation}.
\end{itemize}
\end{thm}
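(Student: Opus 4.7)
The plan exploits the natural scaling of \eqref{equation}: with $\alpha := 2/(p+1)$, the map $u\mapsto u_\lambda(x)=\lambda^{-\alpha}u(\lambda x)$ preserves \eqref{equation}, and the radial function $w(x)=c_{n,p}|x|^\alpha$ (for an explicit $c_{n,p}>0$ determined by $p$ and $n$) is a solution in $\R^n$. Since $\Delta u_i=u_i^{-p}\geq 0$, each $u_i$ is subharmonic, so the mean-value inequality together with the $L^2$ bound in \eqref{energy bound} yields a uniform $L^\infty$ bound on $\overline{B_{3/2}}$.

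For the uniform $C^\alpha$ estimate I would argue by blow-up/compactness. If the seminorm were unbounded along a subsequence $v_k$, there would exist $x_k,y_k\in\overline{B_1}$ with $|v_k(x_k)-v_k(y_k)|/|x_k-y_k|^\alpha\to\infty$. The rescaling $\tilde v_k(z)=r_k^{-\alpha}v_k(x_k+r_kz)$ with $r_k=|x_k-y_k|$ produces solutions on arbitrarily large balls, and, after a standard normalization, one extracts a nontrivial limit that contradicts either a Liouville-type statement or a comparison with the explicit $w$. A cleaner alternative is a direct comparison argument: sliding $u_i(x_0)+c_{n,p}|x-x_0|^\alpha$ beneath $u_i$ controls $u_i(x)$ in terms of the distance from $x_0$ to $\{u_i=0\}$.

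Given the $C^\alpha$ bound, Arzel\`a--Ascoli yields uniform convergence $u_i\to u_\infty$ on $\overline{B_1}$ along a subsequence, and \eqref{energy bound} gives weak $H^1$ convergence. On any $K\Subset B_1\setminus\{u_\infty=0\}$ uniform convergence gives $u_i\geq\delta>0$ for large $i$, so $u_i^{-p}$ is uniformly bounded on $K$ and interior Schauder estimates with bootstrap yield $C^k$ convergence on compact subsets of $B_1\setminus\{u_\infty=0\}$. For the $L^1$ convergence of $u_i^{-p}$, note first that $\{u_\infty=0\}$ has Lebesgue measure zero (from $\int u_\infty^{1-p}\leq M$ by Fatou, since $p>1$), so pointwise convergence holds a.e.; dominated convergence on $\{u_\infty>\varepsilon\}$ combined with equi-integrability of $u_i^{-p}$ near the rupture set (the crucial point, for which the monotonicity formula advertised in the keywords is presumably the main tool) then gives the $L^1$ convergence. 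Strong $H^1$ convergence follows by testing the equation against $u_i\eta^2$ for a cutoff $\eta$, yielding the identity
\[
\int|\nabla u_i|^2\eta^2 = -\int u_i^{1-p}\eta^2 + \int u_i^2\bigl(|\nabla\eta|^2+\eta\Delta\eta\bigr),
\]
and passing to the limit using $L^1$ convergence of $u_i^{1-p}$ (immediate from the $L^1$ convergence of $u_i^{-p}$ and the uniform $L^\infty$ bound, since $u^{1-p}=u\cdot u^{-p}$) together with the uniform convergence of $u_i$.

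Finally, stationarity of $u_\infty$ is the Pohozaev-type inner variation identity
\[
\int_{B_1}\Bigl[\Bigl(\tfrac{1}{2}|\nabla u_\infty|^2-\tfrac{1}{p-1}u_\infty^{1-p}\Bigr)\mathrm{div}\,\xi - Du_\infty\otimes Du_\infty:D\xi\Bigr]\,dx = 0
\]
for all $\xi\in C^1_c(B_1;\R^n)$; each smooth $u_i$ satisfies this identity, and passage to the limit uses strong $H^1$ convergence for the quadratic gradient terms and $L^1$ convergence of $u_i^{1-p}$ for the potential term. The principal obstacle throughout is ruling out energy and mass concentration on the rupture set $\{u_\infty=0\}$, which is precisely where the monotonicity formula enters decisively; once equi-integrability of $u_i^{-p}$ is secured, the remaining steps are essentially routine elliptic arguments.
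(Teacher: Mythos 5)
Your overall skeleton matches the paper's argument: a blow-up/compactness proof of the $C^{2/(p+1)}$ bound, Arzel\`a--Ascoli for uniform convergence, interior elliptic estimates for $C^k$ convergence on $\{u_\infty>0\}$, an $L^1$ convergence step for $u_i^{-p}$ and $u_i^{1-p}$, and passage to the limit in the stationarity identity using strong $H^1$ convergence. Two steps, however, are left vague in a way that hides the actual content.

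First, for the H\"older bound the Liouville input is not the classical Liouville theorem for harmonic functions, and the ``comparison with the explicit $w$'' does not survive the normalization. After rescaling by $r_k$ \emph{and} dividing by the diverging H\"older quotient $L_k$, the limit solves $\Delta\bar u_\infty=0$ only in $\{\bar u_\infty>0\}$, while $\{\bar u_\infty=0\}$ may be nonempty; the scalar identity it globally satisfies is $\bar u_\infty\Delta\bar u_\infty=0$. What kills this case is the Noris--Tavares--Terracini--Verzini Liouville theorem (reproduced in the paper's Appendix): a nonnegative, globally $C^\alpha$, \emph{stationary} solution of $\bar u\Delta\bar u=0$ on $\mathbb R^n$ with $\alpha<1$ is constant, and proving stationarity of the limit (via passage to the limit in the Pohozaev-type identity for the rescaled $\bar u_i$) is itself a nontrivial sub-step requiring the strong $H^1$ convergence of the rescaled sequence. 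Without identifying this specific theorem, the blow-up argument does not close.

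Second, you attribute equi-integrability of $u_i^{-p}$ to the monotonicity formula; that is not where it comes from, and the monotonicity formula plays no role in this theorem (it is used for the dimension reduction of Theorem~\ref{thm dimension estiamte of rupture set}). The actual mechanism is: (i) a Morrey-type growth bound $\int_{B_r(x)}u_i^{-p}\leq Cr^{n-\frac{2p}{p+1}}$, obtained by integrating $u_i^{-p}\eta$ against $\Delta\eta$ and using the uniform H\"older modulus (Lemma~\ref{lem growth of u^{-p}}); and (ii) a Hausdorff measure estimate $H^{\,n-\frac{2p}{p+1}}(\{u_\infty=0\}\cap B_1)=0$ (Lemma~\ref{measure estimate of rupture set}), which combines the Lebesgue-null property of the rupture set with the two-sided H\"older control near rupture points to produce a covering of $\{u_\infty=0\}$ by balls with arbitrarily small total $(n-\tfrac{2p}{p+1})$-content. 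Your Fatou argument does give $|\{u_\infty=0\}|=0$ (and is simpler than the paper's non-degeneracy route), but Lebesgue measure zero alone does not yield a covering on which the Morrey bound produces a small integral, because the Morrey exponent $n-\tfrac{2p}{p+1}$ is strictly less than $n$. The Hausdorff estimate is the missing bridge, and it is what makes your ``dominated convergence off a small neighborhood plus smallness on the neighborhood'' strategy actually work.

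The remaining steps you sketch (the $u^{1-p}=u\cdot u^{-p}$ reduction, the energy identity for strong $H^1$ convergence, and the limit in the inner-variation identity) are correct and coincide with the paper.
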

By a solution we mean that $u \in H^1$, $u^{-p}\in L^1$ and satisfies \eqref{equation} in the sense of distributions.
We say a solution $u\in H^1\cap L^{1-p}$ is stationary if for any
smooth vector field $Y$ with compact support,
\begin{equation}\label{stationary condition}
\int\left(\frac{1}{2}|\nabla u|^2
-\frac{1}{p-1}u^{1-p}\right)\mbox{div}Y -DY(\nabla u,\nabla u)=0.
\end{equation}

Next we consider the partial regularity problem for stationary
solutions.

\begin{thm}\label{thm dimension estiamte of rupture set}
Assume $u$ is a $C^{\frac{2}{p+1}}$ continuous,
stationary solution of \eqref{equation}. Then  $\{u=0\}$ is a closed set with Hausdorff dimension no more than
$n-2$. Moreover, if $n=2$, $\{u=0\}$ is a discrete set.
\end{thm}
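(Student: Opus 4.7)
The plan is to combine a monotonicity formula for stationary solutions of \eqref{equation} with a Federer-type dimension reduction. Equation \eqref{equation} is invariant under the scaling $u_\lambda(x) = \lambda^{-2/(p+1)} u(\lambda x)$, which gives the natural H\"older exponent $2/(p+1)$ appearing in Theorem \ref{thm convergence} and suggests that blow-ups at a rupture point should be homogeneous of degree $2/(p+1)$. Testing the stationarity identity \eqref{stationary condition} with radial vector fields $Y(x) = \phi(|x - x_0|)(x - x_0)$ and using the equation to handle the $u^{1-p}$ term, I would derive a scale-invariant quantity of the form
\[
E(x_0, r) := r^{\frac{2(p-1)}{p+1} - n} \int_{B_r(x_0)} \left( \frac{1}{2} |\nabla u|^2 - \frac{1}{p-1} u^{1-p} \right) dx + (\text{$u^2$ boundary correction on } \partial B_r),
\]
and show that $\frac{d}{dr} E(x_0, r) \ge 0$, with the derivative dominating $r^{\alpha} \int_{\partial B_r(x_0)} \bigl( \partial_\nu u - \tfrac{2}{(p+1) r} u \bigr)^2 d\sigma$ for an appropriate $\alpha$. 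Constancy of $E$ on any interval then forces $u$ to be homogeneous of degree $2/(p+1)$ about $x_0$.

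Given monotonicity, define the density $\Theta(x_0) := \lim_{r \to 0^+} E(x_0, r)$; it is upper semicontinuous, and strictly positive exactly on $\{u = 0\}$. At a rupture point $x_0$, the blow-ups $u^{x_0,r}(y) := r^{-2/(p+1)} u(x_0 + ry)$ inherit the energy and $C^{2/(p+1)}$ bounds by scaling, so Theorem \ref{thm convergence} extracts a subsequence converging to a stationary limit $v$ on $\R^n$. Since $E$ is scale invariant, $E(u^{x_0,r_i}; 0, \rho) = E(u; x_0, r_i\rho) \to \Theta(x_0)$ for every fixed $\rho > 0$, so $E(v; 0, \rho) \equiv \Theta(x_0)$ is constant in $\rho$; by the monotonicity formula applied to $v$, this forces $v$ to be homogeneous of degree $2/(p+1)$ about the origin.

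Federer's scheme reduces the dimension bound to ruling out nontrivial homogeneous stationary solutions on $\R^1$ with a rupture at the origin. Any such solution has the form $v(t) = c_\pm |t|^{2/(p+1)}$ on $\{\pm t > 0\}$; plugging into $v'' = v^{-p}$ yields
\[
c_\pm \, \frac{2}{p+1} \, \frac{1-p}{p+1} \, |t|^{-\frac{2p}{p+1}} \;=\; c_\pm^{-p} \, |t|^{-\frac{2p}{p+1}},
\]
whose left side is negative (as $p > 1$ and $c_\pm > 0$) while the right side is positive, a contradiction. Standard stratification then gives $\dim_{\mathcal{H}}\{u=0\} \le n - 2$. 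For $n = 2$, suppose $x_0$ is a non-isolated rupture with $x_i \to x_0$ in $\{u = 0\}$; setting $r_i = |x_i - x_0|$ and $\omega_i = (x_i - x_0)/r_i \to \omega \in S^1$, the blow-up limit $v$ satisfies $v(0) = v(\omega) = 0$ and by homogeneity vanishes on the entire ray through $\omega$. Blowing up $v$ at $\omega$ and invoking the classical interchange of homogeneity with translation (via the identity $v^{\omega,\rho}(y + t\omega) = v^{\omega,\,\rho/(1+\rho t)}(y)$) produces a tangent which is translation-invariant along $\omega$ and homogeneous of degree $2/(p+1)$; this is effectively a 1D homogeneous rupture solution, contradicting the preceding step. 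Hence $\{u=0\}$ is discrete when $n=2$.

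The main obstacle is deriving the monotonicity formula with the correct bulk and boundary terms so that $E'(r)$ appears as a manifestly nonnegative square; the nonlinear term $u^{1-p}$ and the possible roughness of $\{u = 0\}$ both require care, but the $C^{2/(p+1)}$ continuity and $L^{1-p}$ integrability built into the hypothesis of a stationary solution are precisely what is needed to justify the requisite integration by parts, perhaps via a cutoff of $\{u < \varepsilon\}$ and passage to the limit. The subsequent blow-up extraction and dimension-reduction steps are comparatively standard once the monotonicity is in place.
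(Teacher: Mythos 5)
Your proposal follows the same overall path the paper takes: the Pohozaev-type monotonicity formula for $E(r;x,u)$, the density $\Theta(x)=\lim_{r\to 0}E(r;x,u)$ together with the characterization that $\Theta(x)=-\infty$ iff $u(x)>0$ and its upper semicontinuity, blow-ups that are homogeneous of degree $2/(p+1)$ (via the accumulated monotonicity defect together with strong $H^1$ convergence of the rescaled sequence), and Federer dimension reduction. The one place you diverge is the elementary base-case lemma. The paper's Lemma~\ref{lem homogeneous solutions in dim 2} (and the remark following it for $n=1$) writes a homogeneous solution as $u=r^{2/(p+1)}\varphi(\theta)$ and observes that a zero $\varphi(\theta_0)=0$ combined with the H\"older bound forces $\varphi(\theta)^{-p}\geq c\,|\theta-\theta_0|^{-2p/(p+1)}$ near $\theta_0$; since $2p/(p+1)>1$ for $p>1$, this contradicts $u^{-p}\in L^1_{loc}$, which is built into the notion of weak solution. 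In two dimensions this kills the single blow-up immediately, whereas you instead perform a second blow-up at $z_\infty$ to obtain a translation-invariant, hence effectively one-dimensional, tangent and then check that the 1D profile $c_\pm|t|^{2/(p+1)}$ violates the sign of $v''=v^{-p}$. Both are valid, but the positivity $c_\pm>0$ you need to run the ODE computation is itself a consequence of $v^{-p}\in L^1_{loc}$, at which point the integrability obstruction already finishes the job; the paper's argument is the more parsimonious formulation of the same fact and avoids the extra translation-invariance bookkeeping in 2D. One small imprecision in your sketch: the density $\Theta$ at rupture points is merely finite (bounded below by the $C^{2/(p+1)}$ and local energy estimates of Section~\ref{section some tools}), while it is $-\infty$ on $\{u>0\}$; the argument relies on this dichotomy, not on strict positivity of $\Theta$ at ruptures.
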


For related estimates on the zero set of solutions see \cite{jiang-lin,guo-wei-cpaa,dupaigne-ponce-porretta,davila-ponce}. The dimension estimate in Theorem~\ref{thm dimension estiamte of rupture set} is the best compared to these previous results, although with different hypotheses.

As an application of the preceding theorems, we consider the original MEMS problem in a bounded domain
\begin{equation}
\label{rup1}
-\Delta v= \frac{\lambda}{(1-v)^p} \ \mbox{in} \ \Omega, v=0 \ \mbox{on} \ \partial \Omega
\end{equation}
where $\Omega \subset \R^n$ is a smooth bounded domain. Here rupture means $v=1$.

It is known that there exists a critical parameter $\lambda_{*}>0$ such that for $\lambda <\lambda_{*}$, problem (\ref{rup1}) has a minimal solution and for $\lambda>\lambda_{*}$ there are no positive solutions. In \cite{esposito-ghoussoub-guo}, Esposito-Ghoussoub-Guo showed that when $n \leq 7$, the extremal solution at $\lambda_{*}$ is smooth and hence there is a secondary bifurcation near $\lambda_*$. When the domain is convex, it is known that the only solutions for $\lambda$ small is the minimal solutions. Thus by Rabinowitz's bifurcation theorem \cite{rabinowitz},  there exists a sequence of $\lambda_{i}\geq c_0>0$ and  a sequence of solutions $ \{u_i=1-v_i\}$ such that $ \min u_i \to 0$.
By convexity of $\Omega$ and the moving plane method, there is a neighborhood $\Omega_\delta$ of $\partial \Omega$ such that $u_i$ remains uniformly positive in $\Omega_\delta$ (see Lemma 3.2 in \cite{guo-wei-cpaa}).
As a consequence of Theorem \ref{thm convergence}, $u_i$ are uniformly bounded in $C^{\frac{2}{p+1}} (\overline \Omega)$ and hence converges uniformly to a H\"older continuous function $ u_\infty$ with nonempty rupture set $\{ u_{\infty} =0 \}$. Applying Theorem  \ref{thm dimension estiamte of rupture set} we obtain the following result.

\begin{thm}
\label{existence}
Let $\Omega \subset \R^2$ be a convex set. Then there exists a $\lambda^{*} >0$ such that the following problem
\begin{equation}
\Delta u= \frac{\lambda^{*}}{u^p} \ \mbox{in} \ \Omega, \ u=1 \ \mbox{on} \ \partial \Omega
\end{equation}
admits a weak solution $u$ such that $u$ is H\"older continuous and the rupture set of $u$ consists a finite number of points.
\end{thm}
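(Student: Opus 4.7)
The plan is to apply Theorems~\ref{thm convergence} and \ref{thm dimension estiamte of rupture set} to the bifurcation sequence $(\lambda_i, u_i)$ described in the paragraph preceding the statement, which satisfies $\lambda_i \ge c_0 > 0$, $u_i = 1$ on $\partial\Omega$, and $\min_\Omega u_i \to 0$. The essential geometric input is the collar $\Omega_\delta$ coming from the moving plane method, on which $u_i \ge c > 0$ uniformly; in this collar the equation and standard elliptic regularity give uniform $C^{2,\alpha}$ bounds, and in particular a uniform bound on $\partial_\nu u_i$ along $\partial\Omega$. Integrating $\Delta u_i = \lambda_i u_i^{-p}$ over $\Omega$ yields
\begin{equation*}
\lambda_i \int_\Omega u_i^{-p} = \int_{\partial\Omega} \partial_\nu u_i \le C.
\end{equation*}
Since $0 < u_i \le 1$ by the maximum principle, $u_i^{-p} \ge 1$, so $\int_\Omega u_i^{-p} \ge |\Omega|$ and therefore $\lambda_i \le C/|\Omega|$. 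Passing to a subsequence, $\lambda_i \to \lambda^* \in (0, \infty)$.

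Next I would verify the energy bound \eqref{energy bound}. Consider the rescaling $\widetilde u_i := \lambda_i^{-1/(p+1)} u_i$, which solves $\Delta \widetilde u_i = \widetilde u_i^{-p}$ in $\Omega$. The $L^2$ bound is immediate; the bound $\int_\Omega \widetilde u_i^{1-p} \le C$ follows from $\widetilde u_i^{1-p} \le C u_i^{-p}$ together with the estimate of the previous paragraph; and testing $\Delta u_i = \lambda_i u_i^{-p}$ against $1 - u_i$, which vanishes on $\partial\Omega$, gives
\begin{equation*}
\int_\Omega |\nabla u_i|^2 = \lambda_i \int_\Omega (1 - u_i) u_i^{-p} \le \lambda_i \int_\Omega u_i^{-p} \le C.
\end{equation*}
Thus \eqref{energy bound} holds for $\widetilde u_i$ on any interior ball contained in $\Omega$.

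Applying Theorem~\ref{thm convergence} on a cover of $\Omega$ by interior balls, combined with the uniform $C^{2,\alpha}$ control on $\Omega_\delta$, yields along a diagonal subsequence a limit $\widetilde u_\infty$ with $\widetilde u_i \to \widetilde u_\infty$ uniformly on $\overline\Omega$, strongly in $H^1$, and with $\widetilde u_i^{-p} \to \widetilde u_\infty^{-p}$ in $L^1$; the limit is stationary, of class $C^{2/(p+1)}(\overline\Omega)$, and equal to $(\lambda^*)^{-1/(p+1)}$ on $\partial\Omega$. Writing $u_\infty := (\lambda^*)^{1/(p+1)} \widetilde u_\infty$ gives a H\"older continuous weak solution of $\Delta u_\infty = \lambda^*/u_\infty^p$ with $u_\infty \equiv 1$ on $\partial\Omega$. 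The rupture set $\{u_\infty = 0\}$ is nonempty since $\min u_i \to 0$ passes to the uniform limit, and it lies in the compact interior region $\overline\Omega \setminus \Omega_\delta$. Theorem~\ref{thm dimension estiamte of rupture set} applied in $n = 2$ shows this set is discrete; being a closed discrete subset of a compact set, it is finite.

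The main obstacle I anticipate is securing the uniform upper bound on $\lambda_i$ and propagating the collar regularity to the limit without losing track of the boundary condition: if either failed, the rupture set could conceivably accumulate at $\partial\Omega$, and Theorem~\ref{thm dimension estiamte of rupture set} (an interior statement) would not force finiteness. The convexity hypothesis enters essentially through the moving plane argument that produces $\Omega_\delta$ and underwrites every subsequent estimate.
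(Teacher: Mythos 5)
Your approach is the same as the paper's (which devotes only a single paragraph to this theorem, leaving most details implicit): take the bifurcation branch with $\lambda_i\ge c_0>0$ and $\min u_i\to 0$, use convexity and the moving plane method for the boundary collar $\Omega_\delta$, rescale to the normalized equation, and apply Theorems~\ref{thm convergence} and \ref{thm dimension estiamte of rupture set}. You usefully fill in details the paper leaves implicit — the rescaling $\widetilde u_i=\lambda_i^{-1/(p+1)}u_i$ so that \eqref{equation} holds exactly, the verification of the energy bound \eqref{energy bound}, the cover of $\Omega$ by interior balls, and the key observation that the rupture set lies in the compact region $\overline\Omega\setminus\Omega_\delta$, so that discreteness from Theorem~\ref{thm dimension estiamte of rupture set} upgrades to finiteness.

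One step as written is circular. You bound $\lambda_i$ from above by first invoking uniform $C^{2,\alpha}$ bounds on $u_i$ in the collar to control $\partial_\nu u_i$ on $\partial\Omega$, and then integrating the equation. But the $C^{2,\alpha}$ estimate in $\Omega_\delta$ requires a uniform bound on $\Delta u_i=\lambda_i u_i^{-p}$ there, and while $u_i^{-p}\le c^{-p}$ is uniform in the collar, $\lambda_i$ is exactly the quantity you are trying to bound; a priori, unboundedness of $\lambda_i$ would spoil the collar regularity and hence the normal-derivative bound. The fix is immediate and already sits in the paper's setup: since problem \eqref{rup1} admits no positive solutions for $\lambda>\lambda_*$, every member of the bifurcation sequence satisfies $c_0\le\lambda_i\le\lambda_*$, so $\lambda_i\to\lambda^*\in(0,\infty)$ along a subsequence without any regularity argument. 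Once this is in place, your collar estimates and the integration-by-parts identity $\lambda_i\int_\Omega u_i^{-p}=\int_{\partial\Omega}\partial_\nu u_i$ become legitimate consequences rather than prerequisites, and the rest of your argument goes through as written.
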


Theorem \ref{existence} was proved by Guo and the third author \cite{guo-wei-cpaa} under the condition that $p<3$ and that the domain has two axes of symmetries.

\medskip

The proof of the uniform H\"older estimate for positive solutions in Theorem~\ref{thm convergence}  is inspired by the work of Noris, Tavares, Terracini and Verzini \cite{NTTV}, where uniform H\"older estimates are established for a strongly competitive Schr\"odinger system. A contradiction argument leads after scaling to a globally H\"older stationary nontrivial solution of 
\begin{equation}
u \Delta u=0, \quad  u\geq 0 \ \mbox{ in} \ \R^n.
\end{equation}
 But a Liouville theorem of \cite{NTTV} says that $u$ is trivial.
The argument is carried out in Section~\ref{section unit holder}  and we give the Liouville theorem in the Appendix for completeness.
The proof of the remaining statements of Theorem~\ref{thm convergence} is given in Section~\ref{section convergence}, after some preliminaries in Section~\ref{section some tools}.
The proof actually applies to a sequence of stationary solutions having a uniform H\"older bound.
Section~\ref{section dimension reduction} contains the proof of Theorem~\ref{thm dimension estiamte of rupture set}.

\medskip

\noindent
{\bf Acknowledgment.} J. D\'avila acknowledges 
support of Fondecyt  1090167, CAPDE-Anillo ACT-125 and
Fondo Basal CMM. Kelei Wang is partially supported
 by the Joint Laboratory of CAS-Croucher in Nonlinear PDE. Juncheng Wei was supported by a GRF grant
from RGC of Hong Kong.  We thank Prof. L. Dupaigne for useful discussions.

\section{The uniform H\"{o}lder continuity}
\label{section unit holder}
\setcounter{equation}{0}

In this section we prove
\begin{thm}\label{thm uniform Holder continuity}
Let $u_i$ be a sequence of positive solutions to \eqref{equation} in
$B_4$ with
\[\sup_i\int_{B_4}u_i<+\infty.\]
Then
\[\sup_i\|u_i\|_{C^{\frac{2}{p+1}}(\overline B_1)}<+\infty.\]
\end{thm}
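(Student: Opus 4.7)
The approach is a blow-up / contradiction argument in the spirit of Noris--Tavares--Terracini--Verzini, as outlined in the introduction. Throughout set $\alpha:=2/(p+1)$, which is both the target H\"older exponent and the scaling exponent of the equation, in the sense that $v(y):=\lambda^{-\alpha}u(\lambda y)$ solves $\Delta v=v^{-p}$ whenever $u$ does.

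First I would reduce to a blow-up regime. Since $\Delta u_i=u_i^{-p}\ge 0$, each $u_i$ is subharmonic, and the sub--mean-value inequality together with the uniform $L^1$ bound gives $\sup_i\|u_i\|_{L^\infty(B_{3/2})}<\infty$. Assuming for contradiction that $L_i:=[u_i]_{C^\alpha(\overline{B_{3/4}})}\to\infty$, pick nearly-optimal pairs $x_i,y_i\in\overline{B_{3/4}}$ with $|u_i(x_i)-u_i(y_i)|\ge \tfrac12 L_i r_i^\alpha$ and $r_i:=|x_i-y_i|$; the $L^\infty$ bound combined with $L_i\to\infty$ forces $r_i\to 0$. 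Setting
\[v_i(z):=\frac{u_i(x_i+r_iz)}{L_i\, r_i^\alpha},\]
the scaling invariance gives $\Delta v_i=L_i^{-(p+1)}v_i^{-p}$ on a ball $B_{R_i}(0)$ with $R_i\to\infty$, $[v_i]_{C^\alpha}\le 1$, and a nontrivial oscillation $|v_i(0)-v_i(e_i)|\ge \tfrac12$ for some unit vector $e_i$.

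Next I would extract a global limit. The H\"older bound yields $|v_i(z)-v_i(0)|\le |z|^\alpha$, so along a subsequence either $v_i(0)$ stays bounded or $v_i\to\infty$ locally uniformly. The second alternative is impossible: then $\Delta v_i\to 0$ uniformly on compact sets and $v_i-v_i(0)$ converges to a harmonic function of sublinear growth on $\R^n$, which must be constant by the classical Liouville theorem -- contradicting $|v_i(0)-v_i(e_i)|\ge \tfrac12$. Hence $v_i\to v_\infty$ locally uniformly for some nonconstant, nonnegative $v_\infty\in C^\alpha(\R^n)$, and on the open set $\{v_\infty>0\}$ the convergence is smooth, so $v_\infty$ is harmonic there.

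The closing step is to show $v_\infty\Delta v_\infty=0$ in $\mathcal D'(\R^n)$ and then invoke the Liouville theorem of the Appendix (adapted from NTTV), which forces $v_\infty$ to be constant and thus produces the contradiction. To handle the zero set, test the equation against $v_i\phi$ with $\phi\in C_c^\infty(\R^n)$, integrate by parts, and combine with a Caccioppoli estimate based on the $L^\infty$ bound for $v_i$ to control $\int|\nabla v_i|^2\phi$ and $L_i^{-(p+1)}\int v_i^{1-p}\phi$ uniformly in $i$. The main obstacle is that $1-p<0$, so the nonlinearity $v_i^{1-p}$ can blow up on $\{v_\infty=0\}$; one must use both the vanishing prefactor $L_i^{-(p+1)}$ and a rescaled $L^1$ bound for $u_i^{1-p}$ (obtained by testing the original equation against cutoffs, using $\|u_i\|_\infty\le C$) to see that the right-hand side vanishes in the distributional limit. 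Once $v_\infty\Delta v_\infty=0$ is in place, the Liouville theorem closes the argument.
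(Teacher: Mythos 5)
Your blueprint captures the core mechanism of the paper's proof — rescale around nearly-extremal H\"older pairs, extract a global limit, kill it with the Liouville theorem of the Appendix — but one step as written has a genuine gap. You maximize the raw seminorm $[u_i]_{C^\alpha}$ over the closed ball $\overline{B_{3/4}}$, and there is nothing to prevent the extremal centers $x_i$ from drifting to $\partial B_{3/4}$ at scale $r_i=|x_i-y_i|$ or faster. In that regime the normalized bound $[v_i]_{C^\alpha}\le 1$ holds only on $\tfrac{1}{r_i}(\overline{B_{3/4}}-x_i)$, whose limit may be a half-space rather than all of $\R^n$; outside it you have no control on $v_i$ (there is no a priori relation between $[u_i]_{C^\alpha(\overline{B_{3/4}})}$ and, say, $[u_i]_{C^\alpha(\overline{B_{7/8}})}$). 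Consequently you cannot conclude that $v_\infty$ is globally $C^\alpha$ on $\R^n$, so neither the harmonic Liouville theorem in your first case nor Theorem~\ref{Liouville theorem} in your second case — both stated on all of $\R^n$ — applies. Your ``sublinear growth on $\R^n$'' claim in the unbounded case fails for the same reason.

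The paper avoids this by working with $\hat u_i=u_i\eta$, where the cutoff $\eta$ vanishes on $\partial B_2$: the Lipschitz bound \eqref{2.4}, $\widetilde u_i(x)\le CL_i^{-1}r_i^{(p-1)/(p+1)}\,\mathrm{dist}(x,\partial\Omega_i)$, together with the unit oscillation $\widetilde u_i(0)+\widetilde u_i(z_i)\ge 1$ from \eqref{2.5}, forces $\mathrm{dist}(0,\partial\Omega_i)\to\infty$ in \emph{both} cases, so the full-space H\"older estimate survives in the limit. An equivalent fix is Korn's weighted seminorm $\sup_{x\neq y}\min(d(x),d(y))^{\alpha}\,|u_i(x)-u_i(y)|/|x-y|^{\alpha}$ with $d(x)=\mathrm{dist}(x,\partial B_1)$, but some such device is needed; picking extremal pairs on a fixed closed ball is not enough. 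A secondary point: your suggested route of rescaling an $L^1$ bound for $u_i^{1-p}$ gives $\varepsilon_i\int_{B_R}v_i^{1-p}\lesssim L_i^{-2}r_i^{-\alpha}R^{n-2+\alpha}$, which is not manifestly $o(1)$; the paper instead establishes $\varepsilon_i\bar u_i^{1-p}\to 0$ in $L^1_{loc}$ by testing the rescaled equation with $\bar u_i\eta^2$, using harmonicity of $\bar u_\infty$ on $\{\bar u_\infty>\sigma\}$ and letting $\sigma\to 0$.
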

The remaining part of this section will be devoted to the proof of
this theorem.

Note that because $u_i$ is subharmonic and positive,
\[\sup_i\|u_i\|_{L^\infty(B_2(0))}<+\infty.\]

Take $\eta\in C^\infty(\mathbb{R}^n)$ such that $\eta\equiv 1$ in
$B_1(0)$, $\{\eta>0\}=B_2(0)$, $\eta=0$ in $\R^n \setminus B_2(0)$.
 Denote
\[\hat{u}_i=u_i\eta.\]
We will actually prove that
\[\sup_i\|\hat u_i\|_{C^{\frac{2}{p+1}}(\bar B_2(0))}<+\infty.\]
Assume this is not true.
Because $\hat{u}_i$ are smooth in $B_2$, there exist $x_i,y_i\in
B_2(0)$ such that as $i\to+\infty$,
\begin{equation}\label{2.1}
L_i=\frac {|\hat{u}_i(x_i)-\hat{u}_i(y_i)|}
{|x_i-y_i|^\frac{2}{p+1}}=\max_{x,y\in B_2(0), x\not=y}\frac
{|\hat{u}_i(x)-\hat{u}_i(y)|} {|x-y|^\frac{2}{p+1}}\to+\infty.
\end{equation}
Note that because $\hat{u}_i$ are uniformly bounded, as
$i\to+\infty$, $|x_i-y_i|\to 0$.

Denote $r_i=|x_i-y_i|$ and $z_i=(y_i-x_i)/r_i$. Since $|z_i|=1$, we
can assume that $z_i\to z_\infty\in\mathbb{S}^{n-1}$. Define
\[\widetilde{u}_i(x):=
L_i^{-1}
r_i^{-\frac{2}{p+1}}\hat{u}_i(x_i + r_i x)
=
L_i^{-1}
r_i^{-\frac{2}{p+1}} u_i(x_i + r_i x) \eta(x_i + r_i x) ,
\]
and
\[\bar{u}_i(x):=L_i^{-1}
r_i^{-\frac{2}{p+1}}u_i(x_i+r_i x)\eta(x_i).
\]
 These functions are
defined in $\Omega_i=\frac{1}{r_i}(B_2(0)-x_i)$. Note that
$\Omega_i$ converges to $\Omega_\infty$, which may be the entire
space or an half space.

We first present some facts about these rescaled functions, which
will be used below. By definition we have
\begin{equation}\label{2.02}
\widetilde{u}_i(x)=\frac{\eta(x_i+r_i x)}{\eta(x_i)}\bar{u}_i(x),
\end{equation}
and
\begin{eqnarray}\label{2.03}
\nabla\widetilde{u}_i(x)&=&\frac{r_i\nabla\eta(x_i+r_i
x)}{\eta(x_i)}\bar{u}_i(x)+\frac{\eta(x_i+r_i
x)}{\eta(x_i)}\nabla\bar{u}_i(x)\\\nonumber
&=&L_i^{-1}r_i^{\frac{p-1}{p+1}}u_i(x_i+r_i x)\nabla\eta(x_i+r_ix)
+\frac{\eta(x_i+r_i x)}{\eta(x_i)}\nabla\bar{u}_i(x)\\\nonumber
&=&\frac{\eta(x_i+r_i
x)}{\eta(x_i)}\nabla\bar{u}_i(x)+O(L_i^{-1}r_i^{\frac{p-1}{p+1}}).
\end{eqnarray}

By \eqref{2.1} and noting that $|z_i|=1$, we have
\begin{equation}\label{2.2}
1=|\widetilde{u}_i(0)-\widetilde{u}_i(z_i)|=\max_{x,y\in
\Omega_i, x\not=y}\frac {|\widetilde{u}_i(x)-\widetilde{u}_i(y)|}
{|x-y|^\frac{2}{p+1}}.
\end{equation}

 Next, because
$\eta$ is Lipschitz continuous in $\overline{B_2(0)}$, for
$x\in\Omega_i$, we have a constant $C$ which depends only on
$\sup_{B_2(0)}u_i$ and the Lipschitz constant of $\eta$, such that
\begin{eqnarray}\label{2.3}
|\widetilde{u}_i(x)-\bar{u}_i(x)|&\leq&\frac{C}{L_i
r_i^{\frac{2}{p+1}}}|\eta(x_i+r_i x)-\eta(x_i)|\\\nonumber &\leq&
CL_i^{-1}r_i^{\frac{p-1}{p+1}}|x|.
\end{eqnarray}
This converges to $0$ uniformly on any compact set of
$\Omega_\infty$ as $i\to+\infty$. By the Lipschitz continuity of
$\eta$, we also have
\begin{equation}\label{2.4}
\widetilde{u}_i(x)\leq
CL_i^{-1}r_i^{\frac{p-1}{p+1}}\text{dist}(x,\partial\Omega_i).
\end{equation}
Finally, we note that $\bar{u}_i$ satisfies
\begin{equation}\label{equation rescaled}
\Delta \bar{u}_i=\varepsilon_i\bar{u}_i^{-p}.
\end{equation}
Here $\varepsilon_i= L_i^{-p-1}\eta(x_i)^{p+1}\to 0$ as
$i\to+\infty$.

We divide the proof into two cases.

\medskip

{\bf Case 1.} $A_i:=\widetilde{u}_i(0)\to+\infty$.\\
By \eqref{2.4},
\[\text{dist}(0,\partial\Omega_i)\geq
cL_ir_i^{-\frac{p-1}{p+1}}A_i\to+\infty.\] Hence $\Omega_i$
converges to $\mathbb{R}^n$. By \eqref{2.2}, we can assume that
(after passing to a subsequence of $i$) $\widetilde{u}_i-A_i$
converges to $\bar{u}_\infty$ uniformly on any compact set of
$\mathbb{R}^n$. By \eqref{2.3}, $\bar{u}_i-A_i$ converges to the
same $\bar{u}_\infty$ uniformly on any compact set of
$\mathbb{R}^n$.

For any $R>0$, if $i$ large, \eqref{2.2} and \eqref{2.3} imply that
\[\inf_{B_R(0)}\bar{u}_i\geq\inf_{B_R(0)}\widetilde{u}_i-CL_i^{-1}r_i^{\frac{p-1}{p+1}}R
\geq
A_i-R^\frac{2}{p+1}-CL_i^{-1}r_i^{\frac{p-1}{p+1}}R\geq\frac{A_i}{2}.\]
So
\[0\leq\Delta(\bar{u}_i-A_i)\leq2^p\varepsilon_iA_i^{-p}\to0.\]
By standard $W^{2,q}$ estimates, for any $q\in(1,+\infty)$,
$\bar{u}_i-A_i$ are uniformly bounded in
$W^{2,q}_{loc}(\mathbb{R}^n)$. Then by the Sobolev embedding
theorem, for any $\alpha\in(0,1)$, $\bar{u}_i-A_i$ are uniformly
bounded in $C^{1,\alpha}_{loc}(\mathbb{R}^n)$. By letting
$i\to+\infty$ in \eqref{equation rescaled}, we see $\bar{u}_\infty$
is a harmonic function on $\mathbb{R}^n$.

By the uniform convergence of $\bar{u}_i-A_i$, we can take the limit
in \eqref{2.2} to get
\[1=|\bar{u}_\infty(0)-\bar{u}_\infty(z_\infty)|=\max_{x,y\in
\Omega_i}\frac {|\bar{u}_\infty(x)-\bar{u}_\infty(y)|}
{|x-y|^\alpha}.\] The first equality implies that $\bar{u}_\infty$
is non-constant, while the second one implies that $\bar{u}_\infty$
is globally $2/(p+1)-$H\"{o}lder continuous, hence a constant by the
Liouville theorem for harmonic functions. This is a contradiction.

\medskip

{\bf Case 2.} $A_i:=\widetilde{u}_i(0)\to A_\infty\in[0,+\infty)$.\\
By the first equality in \eqref{2.2},
\begin{equation}\label{2.5}
1\leq\widetilde{u}_i(0)+\widetilde{u}_i(z_i).
\end{equation}
 Then by \eqref{2.4},
\[cL_ir_i^{-\frac{p-1}{p+1}}\leq\text{dist}(0,\partial\Omega_i)+\text{dist}(z_i,\partial\Omega_i)
\leq 2\text{dist}(0,\partial\Omega_i)+1.
\]
So we still have $\text{dist}(0,\partial\Omega_i)\to+\infty$, and
$\Omega_\infty=\mathbb{R}^n$.

By \eqref{2.2}, we can assume that (by passing to a subsequence of
$i$) $\widetilde{u}_i$ converges to $\bar{u}_\infty$ uniformly on
any compact set of $\mathbb{R}^n$. By \eqref{2.3}, $\bar{u}_i$
converges to the same $\bar{u}_\infty$ uniformly on any compact set
of $\mathbb{R}^n$. By this uniform convergence, we can take the
limit in \eqref{2.5} to get
\[1\leq\bar{u}_\infty(0)+\bar{u}_\infty(z_\infty).\]
So the open set $D:=\{\bar{u}_\infty>0\}$ is non-empty.

In any compact set $D^\prime\subset\subset D$, there exists a
$\delta>0$ such that $\inf_{D^\prime}\bar{u}_\infty=2\delta$. Then
if $i$ large,
\[\inf_{D^\prime}\bar{u}_i\geq\delta.\]
By the same argument as in Case 1, we see
\[\Delta\bar{u}_\infty=0\ \ \ \mbox{in}\ D.\]
Hence $\bar{u}_\infty$ is smooth in $D$. In particular, if
$\{\bar{u}_\infty=0\}=\emptyset$, we can use the same argument in
Case 1 to get a contradiction.

In the following we assume $\{\bar{u}_\infty=0\}\neq\emptyset$.
Without loss of generality, assume that $\bar{u}_\infty(0)=0$.
\begin{lem}
$\bar{u}_i$ converges strongly to $\bar{u}_\infty$ in
$H^1_{loc}(\mathbb{R}^n)$. $\varepsilon_i\bar{u}_i^{1-p}$ converges
to $0$ in $L^1_{loc}(\mathbb{R}^n)$.
\end{lem}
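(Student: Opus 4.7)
The plan is to derive a local energy identity by testing \eqref{equation rescaled} against $\bar{u}_i\psi$ for non-negative $\psi\in C_c^\infty(\R^n)$, then pass to the limit, exploiting the fact (already established) that $\bar{u}_\infty$ is harmonic on $\{\bar{u}_\infty>0\}$. Multiplying $\Delta\bar{u}_i=\varepsilon_i\bar{u}_i^{-p}$ by $\bar{u}_i\psi$ and integrating by parts yields
\[
\int|\nabla \bar{u}_i|^2\psi + \varepsilon_i\int \psi\, \bar{u}_i^{1-p} = \frac{1}{2}\int \bar{u}_i^2\,\Delta\psi.
\]
Since the $\bar{u}_i$ are locally uniformly bounded (by \eqref{2.2} and the already-proved uniform convergence to $\bar{u}_\infty$), choosing $\psi$ to be a standard cutoff yields uniform control of both non-negative terms on the left on every compact set. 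In particular $\bar{u}_i\rightharpoonup \bar{u}_\infty$ weakly in $H^1_{\mathrm{loc}}(\R^n)$, the limit $\bar{u}_\infty$ lies in $H^1_{\mathrm{loc}}$, and the right-hand side converges to $\frac{1}{2}\int \bar{u}_\infty^2\,\Delta\psi$ by uniform convergence.

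Next I would identify the distributional Laplacian of the limit. From $\Delta\bar{u}_i=\varepsilon_i\bar{u}_i^{-p}\geq 0$ and the uniform $L^\infty_{\mathrm{loc}}$ bound, the measures $\Delta\bar{u}_i$ are uniformly bounded on compact sets, so their distributional limit $\mu:=\Delta\bar{u}_\infty$ is a non-negative Radon measure. We already know $\bar{u}_\infty$ is harmonic on the open set $D=\{\bar{u}_\infty>0\}$, so $\mu$ is concentrated on $\{\bar{u}_\infty=0\}$. The function $\bar{u}_\infty\psi\in H^1_c(\R^n)$ is continuous and vanishes on $\mathrm{supp}\,\mu$; a standard approximation argument then gives the weighted identity
\[
\int|\nabla \bar{u}_\infty|^2\psi \;=\; \frac{1}{2}\int \bar{u}_\infty^2\,\Delta\psi - \int \bar{u}_\infty\psi\, d\mu \;=\; \frac{1}{2}\int \bar{u}_\infty^2\,\Delta\psi.
\]

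Taking limits in the energy identity and combining with the identification above produces
\[
\lim_i\Bigl[\int|\nabla \bar{u}_i|^2\psi + \varepsilon_i\int\psi\, \bar{u}_i^{1-p}\Bigr] = \int|\nabla \bar{u}_\infty|^2\psi,
\]
while weak lower semicontinuity gives $\int|\nabla \bar{u}_\infty|^2\psi \leq \liminf_i \int|\nabla \bar{u}_i|^2\psi$. Because both terms on the left of the energy identity are non-negative, these two facts force
\[
\lim_i \int|\nabla \bar{u}_i|^2\psi = \int|\nabla \bar{u}_\infty|^2\psi \quad\text{and}\quad \lim_i \varepsilon_i\int\psi\, \bar{u}_i^{1-p} = 0.
\]
The former, together with the weak convergence of the gradients, upgrades to strong $H^1_{\mathrm{loc}}$ convergence. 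The principal technical point I anticipate is the integration-by-parts against the measure $\mu$; it rests on the continuity of $\bar{u}_\infty$ and on the localization $\mathrm{supp}\,\mu\subseteq\{\bar{u}_\infty=0\}$, which in turn follows from harmonicity of $\bar{u}_\infty$ on $\{\bar{u}_\infty>0\}$.
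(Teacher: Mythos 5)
Your proof is correct, and it tracks the paper's overall structure: test the rescaled equation against $\bar u_i$ times a cutoff, obtain a uniform local energy bound, pass to the limit in the resulting identity, and then kill the limit of the cross/boundary term using the fact that $\bar u_\infty\,\Delta\bar u_\infty=0$. The one genuinely different point is how you prove the key identity $\int |\nabla\bar u_\infty|^2\psi = \tfrac12\int\bar u_\infty^2\Delta\psi$. The paper proves it by integrating over the super-level sets $\{\bar u_\infty>\sigma\}$ (where $\bar u_\infty$ is harmonic), picking $\sigma$ for which $\{\bar u_\infty=\sigma\}$ is a smooth hypersurface via Sard's theorem, and letting $\sigma\to0$ after observing the boundary flux term is $O(\sigma)$. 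You instead treat $\mu:=\Delta\bar u_\infty$ directly as a nonnegative Radon measure supported in $\{\bar u_\infty=0\}$, pair it with the continuous $H^1_c$ function $\bar u_\infty\psi$ which vanishes on $\operatorname{supp}\mu$, and justify the integration by parts against $\mu$ by mollification. Both routes are sound; yours avoids Sard and explicit boundary integrals at the cost of making the approximation-against-the-measure argument (which you correctly flag as the nontrivial technical point, and which requires a small care: the mollifications need not vanish on $\operatorname{supp}\mu$, but that is irrelevant since the pairing converges and the limit $\int\bar u_\infty\psi\,d\mu$ is computed separately to be zero). Interestingly, the paper's own remark immediately after the lemma makes precisely your measure-theoretic observation that $\bar u_\infty\Delta\bar u_\infty=0$ is well-defined as the pairing of a Radon measure with a continuous function, so you have essentially promoted the authors' remark to the actual mechanism of proof.

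Two small bookkeeping notes. Your identity arises from testing with $\bar u_i\psi$ (so the cross term is absorbed into $\tfrac12\int\bar u_i^2\Delta\psi$), whereas the paper tests with $\bar u_i\eta^2$ and keeps the cross term explicit; these are equivalent. Also, for the uniform $H^1_{loc}$ bound you invoke local uniform boundedness of $\bar u_i$: this comes from the bounded sequence $A_i=\widetilde u_i(0)$ in Case~2 combined with the H\"older normalization \eqref{2.2} and the comparison \eqref{2.3}, which is what the paper uses as well.
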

\begin{proof}
Take a function $\eta\in C_0^\infty(\mathbb{R}^n)$.
 Testing the
equation of $\bar{u}_i$ with $\bar{u}_i\eta^2$, we get
\begin{equation}\label{2.6}
\int_{\mathbb{R}^n}|\nabla\bar{u}_i|^2\eta^2+\varepsilon_i\bar{u}_i^{1-p}\eta^2
+2\bar{u}_i\eta\nabla\bar{u}_i\nabla\eta=0.
\end{equation}

First, by applying the Cauchy inequality to the last term, we have
\[\int_{\mathbb{R}^n}|\nabla\bar{u}_i|^2\eta^2+\varepsilon_i\bar{u}_i^{1-p}\eta^2\leq
4\int_{\mathbb{R}^n}\bar{u}_i^2|\nabla\eta|^2.\] Because $\bar{u}_i$
are uniformly bounded in any compact set of $\mathbb{R}^n$,
$\bar{u}_i$ are uniformly bounded in $H^1_{loc}(\mathbb{R}^n)$. By
the uniform convergence of $\bar{u}_i$, they must converges weakly
to $\bar{u}_\infty$ in $H^1_{loc}(\mathbb{R}^n)$.

By taking limit in \eqref{2.6}, we obtain
\[
\lim_{i\to+\infty}\int_{\mathbb{R}^n}
|\nabla\bar{u}_i|^2\eta^2-|\nabla\bar{u}_\infty|^2\eta^2+\varepsilon_i\bar{u}_i^{1-p}\eta^2
=-\int_{\mathbb{R}^n}|\nabla\bar{u}_\infty|^2\eta^2
+2\bar{u}_\infty\eta\nabla\bar{u}_\infty\nabla\eta.
\]
On the other hand, take a $\sigma>0$ small so that
$\{\bar{u}_\infty=\sigma\}$ is a smooth hypersurface. Then because
$\bar{u}_\infty$ is harmonic in $\{\bar{u}_\infty>\sigma\}$,
\begin{eqnarray*}
\int_{\{\bar{u}_\infty>\sigma\}}|\nabla\bar{u}_\infty|^2\eta^2
+2\bar{u}_\infty\eta\nabla\bar{u}_\infty\nabla\eta
&=&\int_{\{\bar{u}_\infty=\sigma\}}\frac{\partial
\bar{u}_\infty}{\partial\nu}\bar{u}_\infty\eta^2\\
&=&\sigma\int_{\{\bar{u}_\infty=\sigma\}}\frac{\partial
\bar{u}_\infty}{\partial\nu}\eta^2\\
&=&\sigma\int_{\{\bar{u}_\infty>\sigma\}}\nabla
\bar{u}_\infty\nabla\eta^2\\
&=&O(\sigma).
\end{eqnarray*}
Here $\nu$ is the outward unit normal vector to
$\partial\{\bar{u}_\infty>\sigma\}$. By letting $\sigma\to0$, we see
\[\int_{\mathbb{R}^n}|\nabla\bar{u}_\infty|^2\eta^2
+2\bar{u}_\infty\eta\nabla\bar{u}_\infty\nabla\eta=0.\] Hence
\[\lim_{i\to+\infty}\int_{\mathbb{R}^n}
|\nabla\bar{u}_i|^2\eta^2-|\nabla\bar{u}_\infty|^2\eta^2+\varepsilon_i\bar{u}_i^{1-p}\eta^2=0.\qedhere\]
\end{proof}
\begin{rmk}
An essential point in this proof is the fact that
\[\bar{u}_\infty\Delta\bar{u}_\infty=0.\]
This is well defined, because $\Delta\bar{u}_\infty$ is a Radon
measure and $\bar{u}_\infty$ is continuous. From this we also get,
in the distributional sense
\begin{equation}\label{2.7}
\Delta\bar{u}_\infty^2=2|\nabla\bar{u}_\infty|^2.
\end{equation}
\end{rmk}

Because $\bar{u}_i>0$ in $\Omega_i$, it is smooth. Then by standard
domain variation calculation, for any vector field $Y\in
C_0^\infty(\Omega_i,\mathbb{R}^n)$,
\[\int_{\Omega_i}\left(\frac{1}{2}|\nabla\bar{u}_i|^2
-\frac{\varepsilon_i}{p-1}\bar{u}_i^{1-p}\right)\mbox{div}Y
-DY(\nabla\bar{u}_i,\nabla\bar{u}_i)=0.\] By the previous lemma, we
can take the limit to get
\[\int_{\R^n}\frac{1}{2}|\nabla\bar{u}_\infty|^2\mbox{div}Y
-DY(\nabla\bar{u}_\infty,\nabla\bar{u}_\infty)=0,\] for any vector
field $Y\in C_0^\infty(\mathbb{R}^n,\mathbb{R}^n)$.

Now we can apply Theorem \ref{Liouville theorem} in the appendix, which says
$\bar{u}_\infty$ is a constant. This is a contradiction because both
$\{\bar{u}_\infty>0\}$ and $\{\bar{u}_\infty=0\}$ are nonempty.

In conclusion, the assumption \eqref{2.1} does not hold. So
$\hat{u}_i$ are uniformly bounded in
$C^{\frac{2}{p+1}}(\overline{B_2})$. Since $\hat{u}_i=u_i$ in $B_1$,
this finishes the proof of Theorem \ref{thm uniform Holder
continuity}.

\section{Some tools}
\label{section some tools}
\setcounter{equation}{0}

In this section we first present some consequences of the uniform
H\"{o}lder continuity, which we will use to prove Theorems~\ref{thm convergence} and \ref{thm dimension estiamte of rupture set}. Therefore, throughout this section we assume that $u_i$ is a sequence of stationary solutions of \eqref{equation} in $B_2(0)$ satisfying\begin{align}
\label{unif holder}
\sup_i \|u_i\|_{C^{\frac{2}{p+1}}(\overline B_{3/2}(0))}< +\infty.
\end{align}
By Theorem~\ref{thm uniform Holder continuity},
this includes the case that $u_i$ are positive solutions  of \eqref{equation} in $B_2(0)$ satisfying \eqref{energy bound}.

\begin{lem}\label{lem growth of u^{-p}}
There exists a constant $C$  such that for any $i$, $x\in B_1$ and
$r\in(0,1/2)$,
\[\int_{B_r(x)}u_i^{-p}\leq Cr^{n-2\frac{p}{p+1}}.\]
\end{lem}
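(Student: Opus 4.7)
The plan is to combine two complementary bounds parameterized by $\delta := \min_{\overline{B_r(x)}} u_i = u_i(y_0)$ for some $y_0 \in \overline{B_r(x)}$. Since $u_i \geq \delta$ on $B_r(x)$, one bound is immediate:
\[
\int_{B_r(x)} u_i^{-p} \leq \delta^{-p}|B_r(x)| \leq C\delta^{-p}r^n .
\]
This is effective when $\delta$ is large relative to the natural scale $r^{2/(p+1)}$.

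For the complementary bound, useful when $\delta$ is small, I would pick a standard cutoff $\phi \in C_c^\infty(B_{2r}(x))$ with $\phi \equiv 1$ on $B_r(x)$ and $|\Delta \phi| \leq C/r^2$. Using the equation and integrating by parts twice,
\[
\int_{B_r(x)} u_i^{-p} \leq \int u_i^{-p}\phi = \int \Delta u_i\cdot\phi = \int u_i\,\Delta\phi \leq \frac{C}{r^2}\int_{B_{2r}(x)} u_i .
\]
The uniform H\"older estimate \eqref{unif holder} applied at $y_0$ gives $u_i(z) \leq \delta + C|z - y_0|^{2/(p+1)} \leq \delta + C r^{2/(p+1)}$ for all $z \in B_{2r}(x)$, and we obtain
\[
\int_{B_r(x)} u_i^{-p} \leq C r^{n-2}\bigl(\delta + r^{2/(p+1)}\bigr) .
\]

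Combining the two: if $\delta \geq r^{2/(p+1)}$ the first bound yields $\delta^{-p}r^n \leq r^{n - 2p/(p+1)}$, while if $\delta < r^{2/(p+1)}$ the second bound yields $2Cr^{n - 2 + 2/(p+1)} = 2Cr^{n - 2p/(p+1)}$, so the estimate holds in either case. The conceptual point is that the H\"older bound forces $u_i^{-p}$ to blow up at most like the natural rate $r^{-2p/(p+1)}$ coming from the extremal scaling $u \sim r^{2/(p+1)}$, which is exactly what makes the estimate sharp.

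The main nuisance I anticipate is purely technical: the test-function step needs $B_{2r}(x) \subset \overline{B_{3/2}}$ so that \eqref{unif holder} applies at $y_0$, which is automatic only for $r$ small (say $r \leq 1/8$ given $x \in B_1$). For the remaining range $r \in [1/8, 1/2)$ the target exponent $r^{n-2p/(p+1)}$ is bounded below by a positive constant, so a finite covering of $B_r(x)$ by balls of radius $\leq 1/8$ centered inside $B_1$ reduces the claim to the already-established small-$r$ case.
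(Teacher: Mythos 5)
Your proof is correct and reaches the sharp exponent, but it takes a genuinely different route from the paper's. The paper subtracts the constant $u_i(x)$ inside the integration by parts, exploiting $\int\Delta\eta=0$ (since $\eta$ has compact support), so that
\[
\int u_i^{-p}\eta=\int u_i\,\Delta\eta=\int\bigl(u_i-u_i(x)\bigr)\Delta\eta\le \sup_{B_{2r}(x)}|u_i-u_i(x)|\int|\Delta\eta|\le Cr^{\frac{2}{p+1}}\cdot Cr^{n-2},
\]
and the H\"older seminorm controls the integrand uniformly regardless of the size of $u_i$; no case distinction is needed. You omit this cancellation and bound $\int u_i\,\Delta\phi$ by $Cr^{-2}\int_{B_{2r}(x)}u_i$ directly, which is only efficient when $\delta=\min_{\overline{B_r(x)}}u_i$ is small; hence your bifurcation on $\delta$ versus $r^{2/(p+1)}$, with the trivial pointwise bound handling the complementary regime. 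Both arguments are valid; the paper's subtraction trick compresses the two regimes into one line, while yours makes the underlying dichotomy (``either $u_i$ stays uniformly positive, or H\"older continuity forces slow growth'') explicit, which is conceptually transparent but slightly more cumbersome. Your remark about the covering argument for $r$ near $1/2$ flags a genuine, if routine, technicality that the paper glosses over as well: the test function lives on $B_{2r}(x)$, which for $r>1/4$ may exit $\overline{B_{3/2}}$ where \eqref{unif holder} is stated, and your reduction to small radii resolves it correctly.
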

\begin{proof}
Take a nonnegative function $\eta\in C_0^\infty(B_{2r}(x))$ such
that $\eta\equiv1$ in $B_r(x)$ and $|\Delta\eta|\leq Cr^{-2}$. Then
\[\int u_i^{-p}\eta=\int \left(u_i-u_i(x)\right)\Delta\eta\leq Cr^{n-2+\frac{2}{p+1}}.\]
Here we have used the uniform $2/(p+1)-$H\"{o}lder continuity of
$u_i$, which implies that
\begin{equation}\label{3.1}
\sup_{B_r(x)}|u_i-u_i(x)|\leq Cr^{\frac{2}{p+1}}.\qedhere
\end{equation}
\end{proof}

\begin{lem}\label{lem growth of energy}
There exists a constant $C$ depending only on $M$, such that for any
$i$, $x\in B_1$ and $r\in(0,1/2)$,
\[\int_{B_r(x)}|\nabla u_i|^2+u_i^{1-p}\leq Cr^{n-2\frac{p-1}{p+1}}.\]
\end{lem}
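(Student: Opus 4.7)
My approach is to test the equation $\Delta u_i = u_i^{-p}$ against $(u_i-c_i)\eta^2$ for a carefully chosen constant $c_i$ and a standard cutoff $\eta\in C_0^\infty(B_{2r}(x))$ with $\eta\equiv 1$ on $B_r(x)$ and $|\nabla\eta|\le C/r$. The natural choice is $c_i := \min_{\overline{B_{2r}(x)}} u_i$, because the uniform H\"older bound \eqref{unif holder} then forces $0\le u_i - c_i \le Cr^{2/(p+1)}$ on $B_{2r}(x)$. This factor $r^{2/(p+1)}$ is precisely the H\"older scale, and it is what converts the decay of $\int u_i^{-p}$ from Lemma~\ref{lem growth of u^{-p}} into the sharper decay required for $\int u_i^{1-p}+|\nabla u_i|^2$.

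Integrating by parts and splitting $u_i^{-p}(u_i-c_i)=u_i^{1-p}-c_i u_i^{-p}$ produces the energy identity
\begin{equation}\label{plan energy identity}
\int u_i^{1-p}\eta^2 + \int|\nabla u_i|^2\eta^2 = c_i\int u_i^{-p}\eta^2 - 2\int(u_i-c_i)\,\eta\,\nabla u_i\cdot\nabla\eta.
\end{equation}
I would absorb the cross term by Cauchy--Schwarz, so that it contributes at most $\frac12\int|\nabla u_i|^2\eta^2$ plus a remainder $2\int(u_i-c_i)^2|\nabla\eta|^2 \le Cr^{4/(p+1)}\cdot r^{n-2} = Cr^{n-2(p-1)/(p+1)}$, which already has the correct order.

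The main delicate point is controlling $c_i\int u_i^{-p}\eta^2$, since Lemma~\ref{lem growth of u^{-p}} alone only bounds $\int u_i^{-p}\eta^2$ by $Cr^{n-2p/(p+1)}$, off by a factor $r^{2/(p+1)}$ from what we need. I would resolve this by a dichotomy at the H\"older scale. If $c_i \le r^{2/(p+1)}$, Lemma~\ref{lem growth of u^{-p}} directly gives $c_i\int u_i^{-p}\eta^2 \le Cr^{n-2(p-1)/(p+1)}$. If instead $c_i > r^{2/(p+1)}$, then $u_i\ge c_i$ on $B_{2r}(x)$ yields the pointwise bound $u_i^{-p}\le c_i^{-p}$, whence
\[
c_i\int u_i^{-p}\eta^2 \le c_i^{1-p}|B_{2r}| \le Cr^n\cdot r^{-2(p-1)/(p+1)} = Cr^{n-2(p-1)/(p+1)},
\]
again of the right order. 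Inserting both estimates into \eqref{plan energy identity} and dropping either nonnegative term on the left-hand side yields the desired bound on $\int_{B_r(x)}|\nabla u_i|^2 + u_i^{1-p}$, with a constant depending only on the uniform H\"older bound, hence only on $M$ via Theorem~\ref{thm uniform Holder continuity}.
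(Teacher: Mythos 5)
Your proof is correct, and it takes a genuinely different route from the paper's. The paper obtains the two terms separately: it bounds $\int_{B_r(x)} u_i^{1-p}$ directly from Lemma~\ref{lem growth of u^{-p}} via H\"older's inequality,
\[
\int_{B_r(x)} u_i^{1-p}\leq\Bigl(\int_{B_r(x)} u_i^{-p}\Bigr)^{\frac{p-1}{p}}|B_r(x)|^{\frac1p}\leq Cr^{n-2\frac{p-1}{p+1}},
\]
and then controls $\int_{B_r(x)}|\nabla u_i|^2$ by testing the equation against $(u_i-u_i(x))\eta^2$; since $u_i-u_i(x)$ has no sign, the term $\int u_i^{-p}(u_i-u_i(x))\eta^2$ is moved to the right and estimated in absolute value by the H\"older bound $|u_i-u_i(x)|\le Cr^{2/(p+1)}$ together with Lemma~\ref{lem growth of u^{-p}}. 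Your choice $c_i=\min_{\overline{B_{2r}(x)}}u_i$ makes $u_i-c_i\ge 0$, which lets you keep $\int u_i^{1-p}\eta^2$ and $\int|\nabla u_i|^2\eta^2$ together on the left with a definite sign, at the cost of having to control $c_i\int u_i^{-p}\eta^2$ by the dichotomy on whether $c_i$ is above or below the H\"older scale $r^{2/(p+1)}$. Both arguments are clean and of comparable length; the paper's H\"older-inequality step avoids any case analysis for the $u_i^{1-p}$ term, while your version is a single unified energy identity that derives both bounds at once. One small remark: you implicitly use Lemma~\ref{lem growth of u^{-p}} on $B_{2r}(x)$, so the radius range should be taken as $r\in(0,1/4)$ (or the cutoff supported in $B_{3r/2}(x)$), a harmless adjustment of constants.
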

\begin{proof}
First by the previous lemma and H\"{o}lder inequality,
\[\int_{B_r(x)}u_i^{1-p}\leq\left(\int_{B_r(x)}u_i^{-p}\right)^{\frac{p-1}{p}}|B_r(x)|^{\frac{1}{p}}
\leq Cr^{n-2\frac{p-1}{p+1}}.\]

Take an nonnegative function $\eta\in C_0^\infty(B_{2r}(x))$ such
that $\eta\equiv1$ in $B_r(x)$ and $|\nabla\eta|\leq 2r^{-1}$.
Testing the equation of $u_i$ with $(u_i-u_i(x))\eta^2$, we get
\[\int|\nabla u_i|^2\eta^2+u_i^{-p}(u_i-u_i(x))\eta^2=-2\int\nabla u_i\nabla\eta(u_i-u_i(x))\eta.\]
The Cauchy inequality gives
\[\int|\nabla u_i|^2\eta^2\leq\int
u_i^{-p}|u_i-u_i(x)|\eta^2+8\int|\nabla\eta|^2(u_i-u_i(x))^2.\]
 Then using the previous lemma and \eqref{3.1} we have
\begin{eqnarray*}
\int|\nabla u_i|^2\eta^2&\leq&\sup_{B_r(x)}|u_i-u_i(x)|\int
u_i^{-p}\eta^2+8\sup_{B_r(x)}|u_i-u_i(x)|^2\int|\nabla\eta|^2\\
&\leq& Cr^{n-2\frac{p-1}{p+1}}.
\end{eqnarray*}
\end{proof}

The following result holds for any $2/(p+1)-$H\"{o}lder continuous
solutions.
\begin{lem}\label{gradient estimate}
If $x\in\{u>0\}$,
\[|\nabla u(x)|\leq Cu(x)^{-\frac{p-1}{2}}.\]
\end{lem}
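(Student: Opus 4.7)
The plan is to reduce the desired pointwise gradient estimate to a standard interior estimate by rescaling at a scale dictated by the value $u(x)$. Set $a:=u(x)$. Using the $2/(p+1)$-Hölder continuity with seminorm $L$, I would first choose $r:=c\,a^{(p+1)/2}$ with $c>0$ small (depending only on $L$) so that
\[\tfrac{a}{2}\le u\le 2a\quad\text{on }B_r(x).\]
I also need $r$ to lie within the domain where the Hölder bound holds; if $a$ is so large that this forces $r$ beyond that distance, then $u$ is already bounded below on a fixed neighborhood and the conclusion follows from classical interior regularity, so I may assume $r$ is admissible.

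Next I would rescale. Define $v(y):=a^{-1}u(x+ry)$ on $B_1$. A direct computation shows
\[\Delta v(y)=\frac{r^2}{a^{p+1}}\,v(y)^{-p}=c^2\,v(y)^{-p},\]
because the exponent $(p+1)/2$ on $a$ in the choice of $r$ is chosen precisely so that the prefactor becomes a universal constant. Thus $v$ solves an equation of the same family on $B_1$ with $v\in[1/2,2]$ and right-hand side bounded above by $c^2 2^p$. Since $v$ stays away from the singularity of $t\mapsto t^{-p}$, classical interior $C^{1,\alpha}$ estimates for Poisson's equation with bounded right-hand side yield $|\nabla v(0)|\le C$, with $C$ depending only on $p$ and $L$.

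Finally, undoing the scaling via $\nabla u(x)=(a/r)\nabla v(0)$ and using $a/r=c^{-1}a^{1-(p+1)/2}=c^{-1}a^{-(p-1)/2}$ gives
\[|\nabla u(x)|\le C\,u(x)^{-\frac{p-1}{2}},\]
as desired.

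There is no real obstacle beyond identifying the correct scale $r\sim u(x)^{(p+1)/2}$: it is the unique choice under which the Hölder oscillation $r^{2/(p+1)}$ is comparable to $u(x)$ \emph{and} the equation renormalizes to one with uniformly bounded data, so the assumed regularity and the nonlinearity are matched in one stroke. Everything else is standard elliptic estimates and rescaling bookkeeping.
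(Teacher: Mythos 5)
Your proof is correct and is essentially the same as the paper's: both choose the scale $r\sim u(x)^{(p+1)/2}$ (the paper writes $h=u(x)^{(p+1)/2}$ and rescales on $B_\delta(0)$, while you absorb the small constant into $r$ and work on $B_1$), observe that the Hölder bound pins $u$ between $u(x)/2$ and $2u(x)$ on that ball, rescale so the equation renormalizes to one with uniformly bounded data, and invoke standard interior elliptic estimates. The bookkeeping and the final scaling identity match the paper exactly.
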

\begin{proof}
Denote $h^{\frac{2}{p+1}}=u(x)>0$. By the H\"{o}lder continuity,
$u\geq\frac{h^{\frac{2}{p+1}}}{2}$ in $B_{\delta h}(x)$, where
$\delta$ depends only the $C^{2/(p+1)}$ norm of $u$.  Note that we
also have $u\leq2h^{\frac{2}{p+1}}$ in $B_{\delta h}(x)$.

Define $\bar{u}(y)=h^{-\frac{2}{p+1}}u(x+hy)$. Then in
$B_\delta(0)$, $1/2\leq\bar{u}\leq 2$, and $\bar{u}$ satisfies the
equation \eqref{equation}. By standard elliptic estimates, there
exists a constant $C$ depending only on $\delta$ and $n$ so that
\[|\nabla\bar{u}(0)|\leq C.\]
Rescaling back we get the required claim.
\end{proof}
This estimates implies that $|\nabla u^{\frac{p+1}{2}}|\leq C$ in
$\{u>0\}$. Thus we get
\begin{coro}
$u^{\frac{p+1}{2}}$ is Lipschitz continuous.
\end{coro}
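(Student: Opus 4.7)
The plan is to prove global Lipschitz continuity of $v := u^{\frac{p+1}{2}}$ in two steps: first establishing a uniform gradient bound for $v$ on the open set $\{u > 0\}$ (where $u$ is smooth, being a classical positive solution of \eqref{equation}), and then extending the bound across the rupture set by continuity, using the crucial fact that $v$ vanishes wherever $u$ does.

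For the first step, on $\{u > 0\}$ one has $\nabla v = \frac{p+1}{2}\, u^{\frac{p-1}{2}}\,\nabla u$, so Lemma \ref{gradient estimate} yields $|\nabla v| \leq \frac{p+1}{2}\, u^{\frac{p-1}{2}} \cdot C\, u^{-\frac{p-1}{2}} = C'$, uniformly away from the rupture set. Since $u$ is continuous and nonnegative, $v$ extends continuously to the full domain with $v \equiv 0$ on $\{u = 0\}$.

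For the second step, fix two points $x, y$ and parametrize the straight segment by $\gamma(t) = x + t(y - x)$, $t \in [0,1]$. If $\gamma([0,1]) \subset \{u > 0\}$, integrating the gradient bound along $\gamma$ gives $|v(x) - v(y)| \leq C'\,|y - x|$ directly. Otherwise, the set $T := \{t \in [0,1] : u(\gamma(t)) = 0\}$ is closed and nonempty; let $t_1 = \min T$ and $t_2 = \max T$ (so $t_1 = 0$ if $u(x) = 0$, and $t_2 = 1$ if $u(y) = 0$). On $[0, t_1)$ the segment remains in $\{u > 0\}$, so the gradient bound together with continuous extension at $t_1$ yields $|v(x)| = |v(x) - v(\gamma(t_1))| \leq C'\, t_1\, |y - x|$; symmetrically $|v(y)| \leq C'\,(1 - t_2)\,|y - x|$. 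The triangle inequality then delivers $|v(x) - v(y)| \leq |v(x)| + |v(y)| \leq C'\,|y - x|$.

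The only subtlety is that $\{u > 0\}$ may be disconnected, so one cannot in general join two points of this open set by a path lying inside it. What resolves this cleanly is the fact that $v$ vanishes on $\{u = 0\}$: any excursion of the segment into the rupture set contributes nothing to the estimate, and the Lipschitz constant obtained on the smooth region transfers to a global bound with no further geometric information about $\{u = 0\}$.
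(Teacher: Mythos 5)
Your proof is correct and takes essentially the same approach as the paper: the paper's argument is precisely the gradient bound $|\nabla u^{\frac{p+1}{2}}| \leq C$ on $\{u>0\}$ coming from Lemma~\ref{gradient estimate}, with the extension to global Lipschitz continuity left implicit. Your segment argument across $\{u=0\}$, using that $u^{\frac{p+1}{2}}$ vanishes there so excursions into the rupture set cost nothing, is exactly the detail needed to finish that step.
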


The next result is taken from \cite{ma-wei}, and it can be viewed as a
non-degeneracy result.
\begin{lem}\label{nondegeneracy}
There exists a constant $c$ depending only on $M$, such that for any
$i$, $x\subset B_1$ and $r\in(0,1/2)$,
\[\int_{B_r(x)}u_i\geq cr^{n+\frac{2}{p+1}}.\]
\end{lem}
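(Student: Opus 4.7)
The plan is to argue by contradiction, combining the subharmonicity of $u_i$ (which is immediate from $\Delta u_i=u_i^{-p}\ge 0$) with the upper bound on $\int u_i^{-p}$ already supplied by Lemma~\ref{lem growth of u^{-p}}. Suppose the conclusion fails, so that for every $\eta>0$ one can find an index $i$, a point $x\in B_1$, and a radius $r\in(0,1/2)$ with
\[\int_{B_r(x)}u_i<\eta\, r^{n+\frac{2}{p+1}}.\]

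The first step is to convert this integral smallness into a pointwise bound. Since $u_i\ge 0$ is subharmonic, applying the mean value inequality at each $y\in B_{r/2}(x)$ over the ball $B_{r/2}(y)\subset B_r(x)$ will give
\[u_i(y)\le \frac{1}{|B_{r/2}|}\int_{B_{r/2}(y)} u_i \le \frac{1}{|B_{r/2}|}\int_{B_r(x)} u_i \le C_n\,\eta\, r^{\frac{2}{p+1}}\]
for every $y\in B_{r/2}(x)$. Taking $-p$-th powers produces the pointwise lower bound $u_i^{-p}\ge c_n\eta^{-p}r^{-2p/(p+1)}$ on $B_{r/2}(x)$, and integrating yields
\[\int_{B_{r/2}(x)}u_i^{-p}\ge c_n\,\eta^{-p}\,r^{n-\frac{2p}{p+1}}.\]
But Lemma~\ref{lem growth of u^{-p}} supplies the matching upper bound $\int_{B_{r/2}(x)}u_i^{-p}\le C\,r^{n-2p/(p+1)}$ with $C$ independent of $i$, $x$, $r$. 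Comparing the two inequalities forces $c_n\eta^{-p}\le C$, so $\eta\ge(c_n/C)^{1/p}$, contradicting the freedom to take $\eta$ arbitrarily small. The conclusion then holds with $c=(c_n/C)^{1/p}$.

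The only mild subtlety is that $u_i$ is a priori only $C^{2/(p+1)}$, so the mean value inequality must be invoked in its distributional form for continuous subharmonic functions; this is standard since $u_i$ is continuous and $\Delta u_i=u_i^{-p}$ is a nonnegative $L^1_{loc}$ function. I do not anticipate any serious obstacle: the argument is essentially a soft rescaling that pits the smallness of $u_i$ on $B_{r/2}(x)$ against the already-established growth bound on the singular integral $\int u_i^{-p}$, and both sides scale with the same power $r^{n-2p/(p+1)}$, so the nondegeneracy constant $c$ drops out purely from $\eta$.
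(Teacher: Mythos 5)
Your proof is correct, but it takes a genuinely different route from the paper. The paper obtains the inequality in one line from H\"older's inequality: writing $|B_r(x)|=\int_{B_r(x)}u_i^{-p/(p+1)}u_i^{p/(p+1)}\le\bigl(\int_{B_r(x)}u_i^{-p}\bigr)^{1/(p+1)}\bigl(\int_{B_r(x)}u_i\bigr)^{p/(p+1)}$ and then substituting the bound from Lemma~\ref{lem growth of u^{-p}} directly yields $\int_{B_r(x)}u_i\ge cr^{n+\frac{2}{p+1}}$. Your argument instead runs by contradiction through the extra structural fact that $u_i$ is subharmonic (since $\Delta u_i=u_i^{-p}\ge0$): the mean value inequality converts smallness of $\int_{B_r(x)}u_i$ into a pointwise bound $u_i\lesssim\eta\,r^{2/(p+1)}$ on $B_{r/2}(x)$, and integrating the negative power then contradicts Lemma~\ref{lem growth of u^{-p}}. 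Both approaches consume exactly the same input (the upper growth of $\int u_i^{-p}$) and the exponent arithmetic pitting $r^{n+2/(p+1)}$ against $r^{n-2p/(p+1)}$ is identical, so the two proofs are of comparable difficulty. The paper's H\"older route is more economical and does not use subharmonicity, hence requires no sign information on $\Delta u_i$; your route is slightly longer but is conceptually illuminating, since the intermediate pointwise estimate $\sup_{B_{r/2}(x)}u_i\le C_n\eta r^{2/(p+1)}$ makes transparent why the $L^1$-average controls the infimum of $u_i$. The only technical point you flag yourself --- that the mean value inequality holds for the continuous function $u_i$ with $\Delta u_i\in L^1_{loc}$, $\Delta u_i\ge0$ --- is indeed standard, and the argument survives even when $u_i$ has zeros, since $u_i^{-p}=+\infty$ there only strengthens the lower bound on $\int_{B_{r/2}(x)}u_i^{-p}$.
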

\begin{proof}
By the H\"{o}lder inequality,
\[
\int_{B_r(x)}1=\int_{B_r(x)}u_i^{-\frac{p}{p+1}}u_i^{\frac{p}{p+1}}\\
\leq\left(\int_{B_r(x)}u_i^{-p}\right)^{\frac{1}{p+1}}\left(\int_{B_r(x)}u_i\right)^{\frac{p}{p+1}}.\]
Substituting Lemma \ref{lem growth of u^{-p}} into this we get the
estimate.
\end{proof}

Finally let us recall the monotonicity formula for stationary
solutions. \begin{thm}\label{thm monotonicity formula}
 Let $u$ be a stationary solution of \eqref{equation} in $B_1$. Then for any
$B_R(x)\subset B_1$ and $r\in(0,R)$,
$$E(r;x,u)=r^{-n+2\frac{p-1}{p+1}}\int_{B_r(x)}\left(\frac{1}{2}|\nabla u|^2
-\frac{1}{p-1}u^{1-p}\right)-\frac{r^{-n+2\frac{p-1}{p+1}-1}}{p+1}\int_{\partial
B_r(x)} u^2$$ is nondecreasing in $r$. Moreover, if $E(r;x,u)\equiv
const.$, then $u$ is homogeneous with respect to $x$
\[u(x+\lambda y)=\lambda^{\frac{2}{p+1}}u(x+y),\ \ y\in B_R(x),\ \lambda\in(0,1). \]
\end{thm}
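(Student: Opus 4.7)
The plan is to differentiate $E(r):=E(r;x,u)$ directly in $r$ and then substitute two integral identities --- the stationary condition \eqref{stationary condition} applied to a radial vector field, and the weak equation tested against $u$ itself --- to collapse $E'(r)$ into a manifest sum of squares on $\partial B_r$. By translation invariance I take $x=0$. Setting $I(r):=\int_{B_r}\bigl(\tfrac12|\nabla u|^2-\tfrac{1}{p-1}u^{1-p}\bigr)$, $J(r):=\int_{\partial B_r}u^2$, and $\alpha:=-n+\tfrac{2(p-1)}{p+1}$, so that $E(r)=r^\alpha I(r)-\tfrac{r^{\alpha-1}}{p+1}J(r)$, straightforward differentiation using $J'(r)=2\int_{\partial B_r}u u_r+\tfrac{n-1}{r}J(r)$ and $I'(r)=\int_{\partial B_r}\bigl(\tfrac12|\nabla u|^2-\tfrac{1}{p-1}u^{1-p}\bigr)$ gives
\[
E'(r)=\alpha r^{\alpha-1}I(r)+r^\alpha I'(r)-\tfrac{(\alpha+n-2)r^{\alpha-2}}{p+1}J(r)-\tfrac{2r^{\alpha-1}}{p+1}\int_{\partial B_r}u u_r.
\]

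The first auxiliary identity is obtained by plugging $Y(y)=y\,\phi_\epsilon(|y|)$ into \eqref{stationary condition}, with $\phi_\epsilon$ a smooth cutoff approximating $\chi_{[0,r]}$. Since $\mathrm{div}\,Y=n\phi_\epsilon+|y|\phi_\epsilon'$ and $DY(\nabla u,\nabla u)=\phi_\epsilon|\nabla u|^2+|y|\phi_\epsilon' u_r^2$, letting $\epsilon\to 0$ yields
\[
n I(r)-r\int_{\partial B_r}\Bigl(\tfrac12|\nabla u|^2-\tfrac{1}{p-1}u^{1-p}\Bigr)-\int_{B_r}|\nabla u|^2+r\int_{\partial B_r}u_r^2=0.
\]
The second comes from testing the weak form of \eqref{equation} against $u\phi_\epsilon$ and passing to the limit, yielding $\int_{B_r}|\nabla u|^2+\int_{B_r}u^{1-p}=\int_{\partial B_r}u u_r$. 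Substituting the first identity into $E'(r)$ to eliminate $I'(r)$, then using the algebraic relations $\alpha+n=\tfrac{2(p-1)}{p+1}$ and $\alpha+n-2=-\tfrac{4}{p+1}$ together with the second identity to eliminate the bulk integrals, I expect everything to collect into
\[
E'(r)=r^{\alpha}\int_{\partial B_r}\Bigl(u_r-\tfrac{2}{(p+1)r}u\Bigr)^{2}\,dS\ \geq\ 0,
\]
which proves monotonicity. If $E\equiv$ const on $(0,R)$, the integrand must vanish on a.e.\ sphere, so $\partial_r u=\tfrac{2}{(p+1)r}u$; integrating in $r$ gives $u(\lambda y)=\lambda^{2/(p+1)}u(y)$ for $\lambda\in(0,1)$, which is the homogeneity claim.

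The main obstacle is justifying the limits $\phi_\epsilon\to\chi_{[0,r]}$ in the two identities: the term $u^{1-p}$ may blow up near the rupture set $\{u=0\}$, and the traces of $\nabla u$ and $u$ on $\partial B_r$ are only meaningful in a weak sense for a general stationary solution. I would handle this by invoking the coarea formula together with the local integrabilities $|\nabla u|^2,\,u^{1-p}\in L^{1}_{\mathrm{loc}}$ (which are part of the hypothesis that $u$ is a stationary solution in $H^1\cap L^{1-p}$) to extract a full-measure set of radii on which the boundary integrals exist classically and both identities are valid. Since $r\mapsto E(r)$ is absolutely continuous, the a.e.\ inequality $E'(r)\geq 0$ upgrades to monotonicity on the whole interval $(0,R)$, completing the argument.
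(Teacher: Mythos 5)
Your proof is correct and follows the same underlying approach as the paper's: the paper simply cites the derivative identity
\[
\frac{d}{dr}E(r;x,u)=c(n,p)\,r^{2\frac{p-1}{p+1}-n}\int_{\partial B_r(x)}\Bigl(\frac{\partial u}{\partial r}-\frac{2}{p+1}r^{-1}u\Bigr)^{2}\ge 0
\]
from \cite{guo-wei-hausdorff}, whereas you actually derive it (and your computation, eliminating $I'(r)$ via the radial Pohozaev identity and the bulk terms via the tested equation together with $\alpha+n=\tfrac{2(p-1)}{p+1}$ and $\alpha+n-2=-\tfrac{4}{p+1}$, does correctly collapse $E'(r)$ to a perfect square with $c(n,p)=1$). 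The only mild caveat is the one you already flag: for a stationary solution defined only via $u\in H^1$, $u^{1-p}\in L^1$, the cutoff limits and the formulas for $I'(r)$, $J'(r)$ hold for a.e.\ $r$, and absolute continuity of $E$ then upgrades $E'\ge 0$ a.e.\ to monotonicity --- which is the standard and correct way to finish.
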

\begin{proof}
By the proof in \cite{guo-wei-hausdorff}, we have
\begin{equation}\label{E^'}
\frac{d}{dr}E(r;x,u)=c(n,p)r^{2\frac{p-1}{p+1}-n}\int_{\partial
B_r(x)}\left(\frac{\partial u}{\partial
r}-\frac{2}{p+1}r^{-1}u\right)^2\geq0.
\end{equation}
This also characterizes the case of equality.
\end{proof}
By the equation we have
\[\int_{B_r(x)}|\nabla u|^2+u^{1-p}-\int_{\partial B_r(x)}uu_r=0.\]
Multiplying this with $\frac{2}{p-3}r^{2\frac{p-1}{p+1}-n}$, and
adding it into $E(r;x,u)$, we get another form for $E(r;x,u)$
\begin{eqnarray*}
E(r;x,u)&=&r^{-n+2\frac{p-1}{p+1}}\int_{B_r(x)}\left(\frac{1}{2}+\frac{2}{p-3}\right)|\nabla
u|^2 +\left(\frac{2}{p-3}-\frac{1}{p-1}\right)u^{1-p}\\
&&-\frac{1}{p-3}\frac{d}{dr}\left[r^{-n+2\frac{p-1}{p+1}}\int_{\partial
B_r(x)}u^{2}\right].
\end{eqnarray*}

\section{The convergence}
\label{section convergence}
\setcounter{equation}{0}

Let  $u_i$ be a sequence of stationary $C^{\frac{2}{p+1}}$ H\"older solutions of \eqref{equation} in $B_2(0)$ satisfying the uniform estimate \eqref{unif holder}.

Let us list the results we obtained in the previous sections.
There exists a constant $C$ independent of $i$, such that:
\begin{enumerate}
\item For any $x\in B_1$ and $r\in(0,1/2)$,
\begin{equation}\label{4.1}
\int_{B_r(x)}|\nabla u_i|^2+u_i^{1-p}\leq Cr^{n-2\frac{p-1}{p+1}}.
\end{equation}
\item For any $x\in B_1$ and $r\in(0,1/2)$,
\begin{equation}\label{4.2}
\int_{B_r(x)}u_i^{-p}\leq Cr^{n-2\frac{p}{p+1}}.
\end{equation}
\item For any $x,y\in B_1$,
\begin{equation}\label{4.3}
|u_i(x)-u_i(y)|\leq C|x-y|^{\frac{2}{p+1}}.
\end{equation}
\item For any $x\in B_1$ and $r\in(0,1/2)$,
\begin{equation}\label{4.4}
\int_{B_r(x)}u_i\geq \frac{1}{C}r^{\frac{2}{p+1}}.
\end{equation}
\end{enumerate}
By \eqref{4.3}, we can assume that, up to a subsequence of $i$,
$u_i$ converges uniformly to a function $u_\infty$ in $B_1$. Then
with \eqref{4.1}, $u_i$ are also uniformly bounded in $H^1(B_1)$,
and we can assume that it converges to $u_\infty$ weakly in
$H^1(B_1)$. By the uniform convergence, we see $u_\infty$ also
satisfies the estimate \eqref{4.3} and \eqref{4.4}.

By standard elliptic estimates, for any domain $\Omega\subset\subset
\{u_\infty>0\}\cap B_1$ and $k$, $u_i$ converges to $u_\infty$ in
$C^k(\Omega)$.

\begin{lem}\label{measure estimate of rupture set}
$H^{n-2\frac{p}{p+1}}(\{u_\infty=0\}\cap B_1)=0$.
\end{lem}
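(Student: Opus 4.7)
The plan is to exploit the $L^1_{\mathrm{loc}}$-integrability of $u_\infty^{-p}$, which passes to the limit from Lemma \ref{lem growth of u^{-p}} by Fatou, together with the uniform Hölder bound \eqref{4.3}, and then to run a Vitali covering argument at the level of small sublevel sets of $u_\infty$. Throughout I write $s := n - \frac{2p}{p+1}$ and $Z := \{u_\infty = 0\} \cap B_1$.

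The first step is to pass Lemma \ref{lem growth of u^{-p}} to the limit and deduce $|Z|=0$. Since $u_i \to u_\infty$ uniformly in $B_1$ and each $u_i > 0$, one has the pointwise limit $u_i(x)^{-p} \to u_\infty(x)^{-p} \in [0,+\infty]$ at every $x \in B_1$. Fatou's lemma then yields
\[\int_{B_r(x)} u_\infty^{-p} \leq C r^s, \qquad x \in B_1,\ r \in (0,1/2),\]
so $u_\infty^{-p} \in L^1_{\mathrm{loc}}(B_1)$; in particular the integrand is finite almost everywhere, whence $|Z| = 0$.

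The second, and crucial, step is a sharp control of the sublevel sets $Z^t := \{u_\infty < t\} \cap B_1$. These are open by continuity and decrease to $Z$ as $t \downarrow 0$, so $|Z^t| \to 0$. Since $u_\infty^{-p} \geq t^{-p}$ on $Z^t$,
\[|Z^t| \leq t^p \int_{Z^t} u_\infty^{-p},\]
and by dominated convergence (the integrand is bounded by $u_\infty^{-p} \in L^1$ and the sets $Z^t$ shrink to the null set $Z$) the integral on the right tends to $0$. This gives the key refinement
\[|Z^t| = o(t^p) \qquad \text{as } t \to 0^+.\]

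For the final step I fix a small $\delta > 0$ and apply Vitali's $5r$-covering lemma to $\{B_\delta(x) : x \in Z\}$ to extract a finite disjoint subfamily $\{B_\delta(x_j)\}_{j=1}^N$ with centres in $Z$ such that $Z \subset \bigcup_{j=1}^N B_{5\delta}(x_j)$. On each $B_\delta(x_j)$ the Hölder bound \eqref{4.3} forces $u_\infty \leq C\delta^{2/(p+1)}$, so the disjoint balls lie inside $Z^{C\delta^{2/(p+1)}}$ and consequently
\[N c\delta^n \leq \bigl|Z^{C\delta^{2/(p+1)}}\bigr| = o\bigl(\delta^{2p/(p+1)}\bigr),\]
which gives $N = o(\delta^{-s})$. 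Therefore $H^s_{5\delta}(Z) \leq N (5\delta)^s = o(1)$ as $\delta \to 0^+$, and $H^s(Z) = 0$. The main obstacle is precisely the $o(t^p)$ upgrade of $|Z^t|$: a plain Chebyshev estimate only delivers $O(t^p)$, which yields finite but not vanishing $H^s$-measure, so one genuinely needs the equi-integrability of $u_\infty^{-p}$ over the shrinking null set $Z$ to extract the decisive $o(1)$ factor.
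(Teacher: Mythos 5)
Your proof is correct and follows essentially the same route as the paper: integrability of $u_\infty^{-p}$ via Fatou, a disjoint-ball covering of $Z$ with centres in $Z$, the Hölder bound giving a lower bound $\int_{B_{\delta}(x_j)} u_\infty^{-p}\gtrsim \delta^{\,n-2p/(p+1)}$ on each ball, and the absolute continuity of the integral of $u_\infty^{-p}$ over a shrinking set containing $Z$ to force the Hausdorff premeasure to vanish. The only (minor) differences are cosmetic: you use the Vitali $5r$-lemma where the paper takes a maximal $\varepsilon$-separated set, you package the equi-integrability as $|Z^t|=o(t^p)$ on sublevel sets while the paper estimates $\int_{\mathcal N_\varepsilon}u_\infty^{-p}\to 0$ on $\varepsilon$-neighbourhoods of $Z$, and you derive $|Z|=0$ directly from $u_\infty^{-p}\in L^1_{loc}$ rather than via the nondegeneracy estimate \eqref{4.4} and Lebesgue differentiation as the paper does, which is a small but genuine simplification.
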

\begin{proof}
First by \eqref{4.4}, for any $x\in\{u_\infty=0\}\cap B_1$ and
$r\in(0,1/2)$,
\[\sup_{B_r(x)}u_\infty\geq cr^{\frac{2}{p+1}}.\]
Then by the H\"{o}lder continuity \eqref{4.3} for $u_\infty$, there
exists a ball $B_{\delta r}(y)\subset B_r(x)$ ($\delta$ depends on
the H\"{o}lder constant of $u_\infty$) such that
\[u_\infty\geq cr^{\frac{2}{p+1}}\ \ \mbox{in}B_{\delta r}(y).\]
In particular, $B_{\delta r}(y)\subset\{u_\infty>0\}$. This means
for any $x\in\{u_\infty=0\}\cap B_1$ and $r\in(0,1/2)$,
\[\frac{|\{u_\infty=0\}\cap B_r(x)|}{|B_r(x)|}\leq 1-c\delta.\]
By the Lebesgue differentiation theorem, $|\{u_\infty=0\}\cap
B_1|=0$.

Then because $u_i^{-p}$ converges to $u^{-p}$ uniformly in any
compact set of $\{u_\infty>0\}\cap B_1$, $u_i^{-p}$ converges to
$u^{-p}$ a.e. in $B_1$. By the Fatou lemma,
\begin{equation}\label{8}
\int_{B_1}u_\infty^{-p}\leq\liminf_{i\to+\infty}\int_{B_1}u_\infty^{-p}\leq
C.
\end{equation}

For any $\varepsilon>0$, take a maximal $\varepsilon-$separated set
$\{x_i,1\leq i\leq N\}$ of $\{u_\infty=0\}\cap B_1$. By definition,
$B_{\varepsilon/2}(x_i)$ are disjoint, and
\[\{u_\infty=0\}\cap B_1\subset\cup_i^N B_\varepsilon(x_i).\]
Note that every $B_\varepsilon(x_i)$ belongs to the
$\varepsilon-$neighborhood $\mathcal{N}_\varepsilon$ of $\{u_\infty=0\}\cap B_1$. Hence
\begin{equation}\label{9}
\sum_{i=1}^N\int_{B_{\varepsilon/2}(x_i)}u_\infty^{-p}\leq\int_{\mathcal{N}_\varepsilon}u_\infty^{-p},
\end{equation}
which goes to $0$ as $\varepsilon \to 0$. Because $x_i\in\{u_\infty=0\}$,
by \eqref{4.3},
\[\sup_{B_{\varepsilon/2}(x_i)}u_\infty\leq C\varepsilon^{\frac{2}{p+1}}.\]
Thus
\[\int_{B_{\varepsilon/2}(x_i)}u_\infty^{-p}\geq
C\varepsilon^{n-2\frac{p}{p+1}}.\] Substituting this into \eqref{9},
we see
\begin{eqnarray*}
\sum_{i=1}^N\left(\mbox{diam}(B_\varepsilon(x_i))\right)^{n-2\frac{p}{p+1}}
&\leq& C\sum_{i=1}^N\int_{B_{\varepsilon/2}(x_i)}u_\infty^{-p}\\
&\leq&C\int_{\mathcal{N}_\varepsilon}u_\infty^{-p}.
\end{eqnarray*}
By letting $\varepsilon\to 0$, we get
$H^{n-2\frac{p}{p+1}}(\{u_\infty=0\}\cap B_1)=0$.
\end{proof}
Since $u_i^{-1}$ converges to $u_\infty^{-1}$ a.e. in
$B_1$,  by passing limit in \eqref{4.1} and \eqref{4.2} and using
the Fatou lemma, we see $u_\infty$ also satisfies \eqref{4.1} and
\eqref{4.2}. (The estimate of $|\nabla u_\infty|$ is a direct
consequence of weak convergence in $H^1(B_1)$.)

\begin{lem}\label{convergence of u_i^-p}
$u_i^{-p}$ converges to $u_\infty^{-p}$ in $L^1(B_1)$.
\end{lem}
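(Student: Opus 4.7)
The strategy is Vitali-type: we already know $u_i\to u_\infty$ uniformly on $B_1$ and $u_i^{-p}\to u_\infty^{-p}$ in $C^k_{\rm loc}(\{u_\infty>0\}\cap B_1)$, and from \eqref{8} that $u_\infty^{-p}\in L^1(B_1)$. Thus $L^1$ convergence will follow once one establishes equi-integrability of $\{u_i^{-p}\}$ near the rupture set $\{u_\infty=0\}\cap B_1$, and this is what I would concentrate on.

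Fix a small $\varepsilon>0$ and let $\mathcal{N}_\varepsilon$ denote the $\varepsilon$-neighborhood of $\{u_\infty=0\}\cap B_1$. Off $\mathcal{N}_\varepsilon$ we have $u_\infty\ge c(\varepsilon)>0$ by continuity on the compact set $\overline{B_1}\setminus\mathcal{N}_\varepsilon$, so by uniform convergence $u_i\ge c(\varepsilon)/2$ for $i$ large; hence $u_i^{-p}\to u_\infty^{-p}$ uniformly on $B_1\setminus\mathcal{N}_\varepsilon$ and
\[
\int_{B_1\setminus\mathcal{N}_\varepsilon}\bigl|u_i^{-p}-u_\infty^{-p}\bigr|\;\longrightarrow\;0\quad\text{as }i\to\infty.
\]
The main task is to estimate $\int_{\mathcal{N}_\varepsilon}u_i^{-p}$ uniformly in $i$ with a bound vanishing as $\varepsilon\to 0$. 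The analogue for $u_\infty$ is immediate: by Lemma~\ref{measure estimate of rupture set} we have $|\{u_\infty=0\}|=0$, so $|\mathcal{N}_\varepsilon|\to 0$, and absolute continuity of the integral of $u_\infty^{-p}\in L^1(B_1)$ gives $\int_{\mathcal{N}_\varepsilon}u_\infty^{-p}\to 0$.

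To handle $u_i^{-p}$ on $\mathcal{N}_\varepsilon$, I would reuse the covering from Lemma~\ref{measure estimate of rupture set}. Let $\{x_1,\dots,x_N\}\subset\{u_\infty=0\}\cap B_1$ be a maximal $\varepsilon$-separated subset; then $\mathcal{N}_\varepsilon\subset\bigcup_k B_{2\varepsilon}(x_k)$ and the balls $B_{\varepsilon/2}(x_k)$ are disjoint. The uniform bound \eqref{4.2} gives
\[
\int_{\mathcal{N}_\varepsilon}u_i^{-p}\;\le\;\sum_{k=1}^N\int_{B_{2\varepsilon}(x_k)}u_i^{-p}\;\le\;CN\varepsilon^{n-2p/(p+1)}.
\]
The crucial point is that $N\varepsilon^{n-2p/(p+1)}\to 0$ as $\varepsilon\to 0$, independently of $i$. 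I would prove this by the same Hölder-based lower bound used in the previous lemma: $u_\infty(x_k)=0$ together with \eqref{4.3} gives $u_\infty\le C\varepsilon^{2/(p+1)}$ on $B_{\varepsilon/2}(x_k)$, hence $\int_{B_{\varepsilon/2}(x_k)}u_\infty^{-p}\ge c\,\varepsilon^{n-2p/(p+1)}$, and summing over the disjoint family,
\[
cN\varepsilon^{n-2p/(p+1)}\;\le\;\sum_{k=1}^N\int_{B_{\varepsilon/2}(x_k)}u_\infty^{-p}\;\le\;\int_{\mathcal{N}_\varepsilon}u_\infty^{-p}\;\longrightarrow\;0.
\]

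Combining everything: given $\eta>0$, first choose $\varepsilon$ so small that $\int_{\mathcal{N}_\varepsilon}u_\infty^{-p}<\eta/3$ and $CN\varepsilon^{n-2p/(p+1)}<\eta/3$; then pick $i$ large so that $\int_{B_1\setminus\mathcal{N}_\varepsilon}|u_i^{-p}-u_\infty^{-p}|<\eta/3$. This yields $\|u_i^{-p}-u_\infty^{-p}\|_{L^1(B_1)}<\eta$. The delicate step is precisely the uniform-in-$i$ decay of $N\varepsilon^{n-2p/(p+1)}$: the qualitative conclusion $H^{n-2p/(p+1)}(\{u_\infty=0\}\cap B_1)=0$ of Lemma~\ref{measure estimate of rupture set} does not by itself yield a packing bound on a maximal $\varepsilon$-net, so the argument has to go indirectly, via the pointwise lower bound on $u_\infty^{-p}$ near rupture points together with the absolute continuity of the integral.
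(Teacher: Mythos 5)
Your proof is correct, and it takes a route that is close in spirit but technically different from the paper's. The paper proves the missing inequality $\int_{B_1}u_\infty^{-p}\geq\limsup_i\int_{B_1}u_i^{-p}$ by invoking the conclusion $H^{n-2p/(p+1)}(\{u_\infty=0\}\cap B_1)=0$ of Lemma~\ref{measure estimate of rupture set} as a black box: it extracts a covering $\{C_k\}$ of $\{u_\infty=0\}\cap B_1$ by sets of small diameter with $\sum_k(\mathrm{diam}\,C_k)^{n-2p/(p+1)}\le\varepsilon$, replaces it by balls $B_{\mathrm{diam}\,C_k}(x_k)$, and applies \eqref{4.2} to each ball to get $\int_U u_i^{-p}\le C\varepsilon$ uniformly in $i$. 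You instead work with a packing --- a maximal $\varepsilon$-net $\{x_1,\dots,x_N\}$ of $\{u_\infty=0\}\cap B_1$ whose half-balls are disjoint --- and obtain the uniform-in-$i$ smallness of $N\varepsilon^{n-2p/(p+1)}$ from the pointwise lower bound $\int_{B_{\varepsilon/2}(x_k)}u_\infty^{-p}\ge c\varepsilon^{n-2p/(p+1)}$ plus absolute continuity of the integral of $u_\infty^{-p}$. You correctly flag that the qualitative Hausdorff-measure-zero statement does not, by itself, bound a maximal net, which is why your argument has to go through the integral lower bound; in effect, you are inlining the computation inside the proof of Lemma~\ref{measure estimate of rupture set} rather than citing its conclusion. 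What the paper's route buys is modularity (covering from a standard Hausdorff measure characterization); what yours buys is a slightly more self-contained argument that bypasses the Hausdorff measure statement and makes the quantitative packing content explicit. Both lead to the same bound, the uniform-convergence part off the neighbourhood is identical, and your $\varepsilon$-then-$i$ limit order is the same as in the paper.
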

\begin{proof}
By the Fatou lemma, we always have
\[\int_{B_1}u_\infty^{-p}\leq\liminf_{i\to+\infty}\int_{B_1}u_i^{-p}.\]
Thus we only need to prove the reverse inequality
\[\int_{B_1}u_\infty^{-p}\geq\limsup_{i\to+\infty}\int_{B_1}u_i^{-p}.\]

By the previous lemma, for any $\varepsilon>0$, there exists a
covering of $\{u_\infty=0\}\cap B_1$ by $\cap_k C_k$, with
$\mbox{diam}C_k\leq\varepsilon$, and
\begin{equation}\label{10}
\sum_i(\mbox{diam}C_k)^{n-2\frac{p}{p+1}}\leq\varepsilon.
\end{equation}
For each $k$, take an $x_k\in \{u_\infty=0\}\cap B_1\cap C_k$.
Denote the open set
\[U:=\cup_k B_{\mbox{diam}C_k}(x_k).\]
$U$ is an open neighborhood of $\{u_\infty=0\}\cap B_1$. So in
$\left(\{u_\infty>0\}\cap B_1\right)\setminus U$, for all $i$ large,
$u_i^{-p}$ have a uniformly positive lower bound and they converge
to $u_\infty^{-p}$ uniformly. Hence
\begin{equation}\label{11}
\lim_{i\to+\infty}\int_{\left(\{u_\infty>0\}\cap B_1\right)\setminus
U}u_i^{-p}=\int_{\left(\{u_\infty>0\}\cap B_1\right)\setminus
U}u_\infty^{-p}.
\end{equation}

For each $i$ and $k$, by \eqref{4.2},
\[\int_{B_{\text{diam}C_k}(x_k)}u_i^{-p}\leq C(\text{diam}C_k)^{n-2\frac{p}{p+1}}.\]
Summing in $k$ and noting \eqref{10}, we see
\[\int_Uu_i^{-p}\leq\sum_k\int_{B_{\text{diam}C_k}(x_k)}u_i^{-p}\leq C\varepsilon.\]
Combined with \eqref{11}, we obtain
\[\int_{B_1}u_\infty^{-p}\geq\limsup_{i\to+\infty}\int_{B_1}u_i^{-p}-C\varepsilon.\]
Taking $\varepsilon\to 0$, we complete the proof.
\end{proof}

\begin{coro}
$u_\infty$ is a solution to \eqref{equation} in the distributional
sense.
\end{coro}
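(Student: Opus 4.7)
The plan is to verify the definition of distributional solution for $u_\infty$ directly, by passing to the limit in the weak formulation of \eqref{equation} satisfied by each $u_i$. Note that the two ingredients required of a solution, namely $u_\infty \in H^1(B_1)$ and $u_\infty^{-p}\in L^1(B_1)$, are already in hand: the former from the uniform $H^1$ bound \eqref{4.1} and the weak $H^1$ convergence, the latter from Lemma \ref{convergence of u_i^-p} (or equivalently from the Fatou estimate \eqref{8}). So what remains is to establish the identity
$$\int_{B_1} u_\infty\, \Delta \varphi \;=\; \int_{B_1} u_\infty^{-p}\, \varphi$$
for every $\varphi \in C_0^\infty(B_1)$.

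First I would fix such a $\varphi$ and write down the distributional identity for each $u_i$: since each $u_i$ is itself a (stationary, $C^{2/(p+1)}$) solution of \eqref{equation}, we have
$$\int_{B_1} u_i\, \Delta \varphi \;=\; \int_{B_1} u_i^{-p}\, \varphi.$$

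Next I would pass to the limit in both sides. On the left, the uniform convergence $u_i \to u_\infty$ on $\overline{B_1}$ (which in particular implies $L^1$ convergence against the bounded function $\Delta \varphi$) gives $\int u_i \Delta \varphi \to \int u_\infty \Delta \varphi$. On the right, Lemma \ref{convergence of u_i^-p} furnishes the strong $L^1(B_1)$ convergence $u_i^{-p} \to u_\infty^{-p}$, and since $\varphi$ is bounded this yields $\int u_i^{-p}\varphi \to \int u_\infty^{-p}\varphi$. Equating the two limits produces the desired identity.

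There is essentially no obstacle remaining at this stage: the corollary is a short epilogue to Lemma \ref{convergence of u_i^-p}, in which the only genuinely delicate point, namely the strong $L^1$ convergence of the singular nonlinearity despite concentration near $\{u_\infty=0\}$, has already been resolved by the covering argument based on the Hausdorff measure estimate of Lemma \ref{measure estimate of rupture set}.
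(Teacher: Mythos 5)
Your proof is correct and is precisely the argument the paper intends: the corollary is stated without proof as an immediate consequence of Lemma~\ref{convergence of u_i^-p}, and passing to the limit in the distributional identity $\int u_i\,\Delta\varphi=\int u_i^{-p}\varphi$ using uniform convergence of $u_i$ and $L^1$ convergence of $u_i^{-p}$ is exactly that consequence.
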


\begin{lem}
$u_i^{1-p}$ converges to $u_\infty^{1-p}$ in $L^1(B_1)$. $u_i$
converges to $u_\infty$ strongly in $H^1(B_1)$.
\end{lem}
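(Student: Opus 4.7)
The plan is to prove the two assertions in turn: the $L^1$ convergence by an algebraic split into already-known pieces, and the strong $H^1$ convergence by a Caccioppoli-type estimate for $u_i - u_\infty$ that exploits the monotonicity of $t\mapsto t^{-p}$.

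For the $L^1$ convergence of $u_i^{1-p}$, I would use the identity
\[
u_i^{1-p} - u_\infty^{1-p} = (u_i - u_\infty)\,u_i^{-p} + u_\infty\bigl(u_i^{-p} - u_\infty^{-p}\bigr).
\]
The first summand's $L^1(B_1)$ norm is bounded by $\|u_i - u_\infty\|_{L^\infty(B_1)}\,\|u_i^{-p}\|_{L^1(B_1)}$, which tends to $0$ by the uniform convergence of $u_i$ and the bound \eqref{4.2}. The second summand's $L^1(B_1)$ norm is bounded by $\|u_\infty\|_{L^\infty(B_1)}\,\|u_i^{-p} - u_\infty^{-p}\|_{L^1(B_1)}$, which tends to $0$ by Lemma~\ref{convergence of u_i^-p}.

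For strong $H^1(B_1)$ convergence, pick $\eta \in C_c^\infty(B_{3/2})$ with $\eta \equiv 1$ on $B_1$ (all uniform estimates of Section~\ref{section some tools} apply equally on $B_{3/2}$), and test the difference equation $\Delta(u_i - u_\infty) = u_i^{-p} - u_\infty^{-p}$ against $(u_i - u_\infty)\eta^2$. Integration by parts yields
\[
\int |\nabla(u_i - u_\infty)|^2 \eta^2 + 2\int (u_i - u_\infty)\,\eta\,\nabla(u_i - u_\infty)\cdot\nabla\eta = -\int \bigl(u_i^{-p} - u_\infty^{-p}\bigr)(u_i - u_\infty)\eta^2.
\]
Since $t\mapsto t^{-p}$ is decreasing on $(0,\infty)$, the right-hand side is non-positive, and absorbing the cross term via Cauchy--Schwarz gives
\[
\int |\nabla(u_i - u_\infty)|^2 \eta^2 \leq 4\int (u_i - u_\infty)^2\,|\nabla\eta|^2,
\]
which tends to $0$ by uniform convergence. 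Because $\eta \equiv 1$ on $B_1$, this is the desired strong $H^1(B_1)$ convergence.

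The main technical obstacle is the rigorous use of $(u_i - u_\infty)\eta^2$ as a test function for the distributional equation satisfied by $u_\infty$: it must be approximated in $H^1_0$ by $C_c^\infty$ functions, while keeping a uniform $L^\infty$ bound so that dominated convergence applies to the right-hand side $\int u_\infty^{-p}\phi$, using $u_\infty^{-p}\in L^1$ (established just before Lemma~\ref{convergence of u_i^-p}). The interpretation of $u_\infty^{-p}$ at rupture points is harmless because $\{u_\infty=0\}$ has measure zero by Lemma~\ref{measure estimate of rupture set}.
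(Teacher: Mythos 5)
Your proof of the $L^1$ convergence of $u_i^{1-p}$ is correct. You use the exact algebraic decomposition $u_i^{1-p}-u_\infty^{1-p}=(u_i-u_\infty)u_i^{-p}+u_\infty(u_i^{-p}-u_\infty^{-p})$ and then bound each piece; the paper instead uses the elementary inequality $|t^{1-p}-s^{1-p}|\leq C(p)|t-s|(t^{-p}+s^{-p})$ followed by essentially the same two bounds (uniform convergence, uniform $L^1$ control of $u_i^{-p}$). The two routes are equivalent in substance.

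The $H^1$ part, however, contains a sign error that breaks the argument as written. You correctly derive the identity
\[
\int |\nabla(u_i - u_\infty)|^2 \eta^2 + 2\int (u_i - u_\infty)\,\eta\,\nabla(u_i - u_\infty)\cdot\nabla\eta \;=\; -\int \bigl(u_i^{-p} - u_\infty^{-p}\bigr)(u_i - u_\infty)\eta^2 ,
\]
but then assert that the right-hand side is non-positive. In fact, since $t\mapsto t^{-p}$ is \emph{decreasing}, one has $(u_i^{-p}-u_\infty^{-p})(u_i-u_\infty)\leq 0$ pointwise, so the right-hand side is $\geq 0$, not $\leq 0$. This is the ``wrong'' monotonicity for the Caccioppoli trick you invoke: $\Delta u = f(u)$ yields $\int|\nabla(u-v)|^2\eta^2\leq 4\int(u-v)^2|\nabla\eta|^2$ after absorbing the cross term only when $f$ is \emph{non-decreasing}, whereas here $f(t)=t^{-p}$ is strictly decreasing. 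Consequently the clean Caccioppoli bound $\int |\nabla(u_i - u_\infty)|^2 \eta^2 \leq 4\int (u_i - u_\infty)^2|\nabla\eta|^2$ does not follow, and dropping the right-hand side gives an inequality in the wrong direction.

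The conclusion can still be reached, but the right-hand side must be estimated directly rather than discarded by sign. Since the integrand has a definite sign,
\[
-\int \bigl(u_i^{-p} - u_\infty^{-p}\bigr)(u_i - u_\infty)\eta^2 \;\leq\; \sup_{\mathrm{supp}\,\eta}|u_i-u_\infty|\cdot \int_{\mathrm{supp}\,\eta}\bigl(u_i^{-p}+u_\infty^{-p}\bigr)\eta^2 ,
\]
and the right side tends to $0$ by uniform convergence together with the uniform $L^1$ bound from \eqref{4.2} and the Fatou bound for $u_\infty^{-p}$. Then absorbing the cross term by Cauchy--Schwarz gives $\tfrac12\int|\nabla(u_i-u_\infty)|^2\eta^2 \leq o(1) + 2\int(u_i-u_\infty)^2|\nabla\eta|^2 \to 0$. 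The paper's own route to strong $H^1$ convergence is different: it tests the equation of $u_i$ with $u_i\eta^2$ (not with the difference), passes to the limit using the already-established $L^1$ convergence of $u_i^{1-p}$ and $L^2$ convergence of $u_i$ to obtain convergence of the Dirichlet energies, and combines this with weak $H^1$ convergence. Both corrected arguments are valid; yours, once fixed, is arguably more direct since it produces an explicit $o(1)$ estimate for $\|\nabla(u_i-u_\infty)\|_{L^2(B_1)}$ without passing through the first half of the lemma.

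Your closing remark about admissibility of $(u_i-u_\infty)\eta^2$ as a test function is correct and well placed: the approximation in $H^1_0$ with a uniform $L^\infty$ bound, plus $u_\infty^{-p}\in L^1$ and $|\{u_\infty=0\}|=0$, justifies the integration by parts.
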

\begin{proof}
Note that for any $t,s\geq0$, $|t^{1-p}-s^{1-p}|\leq
C(p)|s-t|(s^{-p}+t^{-p})$. Thus, by the previous lemma
\[\int_{B_1}|u_i^{1-p}-u_\infty^{1-p}|\leq C(p)\sup_{B_1}|u_i-u_\infty|
\left(\int_{B_1}u_i^{-p}+u_\infty^{-p}\right)\leq
C\sup_{B_1}|u_i-u_\infty|.\] This converges to $0$ by the uniform
convergence of $u_i$ to $u_\infty$.

By testing the equation of $u_i$ with $u_i\eta^2$, where $\eta\in
C_0^\infty(B_2)$, we have
\[\int_{B_2}|\nabla u_i|^2\eta^2+u_i^{1-p}\eta^2=\int_{B_2}u_i^2\Delta\frac{\eta^2}{2}.\]
By the strong convergence of $u_i$ in $L^2_{loc}(B_2)$, and the
convergence of $u_i^{1-p}$ proved above, we have
\[\lim_{i\to+\infty}\int_{B_2}|\nabla u_i|^2\eta^2+\int_{B_2}u_\infty^{1-p}\eta^2
=\int_{B_2}u_\infty^2\Delta\frac{\eta^2}{2}.\] Since $u_\infty\in
H^1(B_2)$ is a weak solution of \eqref{equation}, and
$u^{1-p}_\infty\in L^1_{loc}$, we also have
\[\int_{B_2}|\nabla u_\infty|^2\eta^2+u_\infty^{1-p}\eta^2
=\int_{B_2}u_\infty^2\Delta\frac{\eta^2}{2}.\] This gives
\[\lim_{i\to+\infty}\int_{B_2}|\nabla u_i|^2=\int_{B_2}|\nabla
u_\infty|^2,\] and the strong convergence of $u_i$ in $H^1(B_1)$.
\end{proof}
By this convergence, we can take limit in \eqref{stationary
condition} for $u_i$ to get the corresponding stationary condition
for $u_\infty$. This finishes the proof of Theorem \ref{thm
convergence}.

\section{Dimension reduction for stationary solutions}
\label{section dimension reduction}
\setcounter{equation}{0}

In this section we assume that $u$ is a $2/(p+1)-$H\"{o}lder
continuous, stationary solution of \eqref{equation} in $B_2$, with
\[\int_{B_2}|\nabla u|^2+u^{1-p}+u^2=M<+\infty.\]
By the results in Section 3, $u$ satisfies all of the estimates
\eqref{4.1}-\eqref{4.4}. In particular, $\{u=0\}$ is a closed set
satisfying (by Lemma \ref{measure estimate of rupture set})
\[H^{n-2+\frac{2}{p+1}}(\{u=0\})=0.\]

Assume that $u(0)=0$, for $\lambda\to 0$, define the blow up
sequence
\[u^\lambda(x)=\lambda^{-\frac{2}{p+1}}u(\lambda x).\]
By a rescaling, we see $u^\lambda$ satisfies
\eqref{4.1}-\eqref{4.4}, for all ball $B_r(x)\subset
B_{\lambda^{-1}}$. By the results established in Section 4, we can
get a subsequence of $\lambda_i\to0$, so that $u_i:=u^{\lambda_i}$
converges uniformly to a $u_\infty$ on any compact set of
$\mathbb{R}^n$. We also have
\begin{enumerate}
\item For each $R$, $u_i^{-p}$ converges to $u_\infty^{-p}$ in
$L^1(B_R)$;
\item For each $R$, $u_i^{1-p}$ converges to $u_\infty^{1-p}$ in
$L^1(B_R)$;
\item For each $R$, $u_i$ converges to $u_\infty$ in
$H^1(B_R)$;
\item $u_\infty$ is a stationary weak solution of \eqref{equation} in the
distributional sense;
\item $u_\infty$ is nonzero.
\end{enumerate}
To continue, we first note the following result.
\begin{lem}\label{convergence of rupture sets}
For any $\varepsilon>0$, if $i$ large, $\{u_i=0\}\cap B_1$ lies in
an $\varepsilon-$neighborhood of $\{u_\infty=0\}\cap B_1$.
\end{lem}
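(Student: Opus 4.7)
The proof is a standard compactness-and-contradiction argument powered by the uniform convergence $u_i\to u_\infty$ on compact subsets of $\mathbb{R}^n$ already recorded at the opening of Section~\ref{section dimension reduction}; nothing deeper (monotonicity formula, non-degeneracy, etc.) is needed.

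Assume the conclusion fails. Then there exist $\varepsilon_0>0$ and, after extracting a subsequence (still indexed by $i$), points $x_i\in\{u_i=0\}\cap B_1$ satisfying
$$\mathrm{dist}\bigl(x_i,\{u_\infty=0\}\cap B_1\bigr)\geq\varepsilon_0.$$
By compactness of $\overline{B_1}$, a further subsequence satisfies $x_i\to x_\infty\in\overline{B_1}$. Since the uniform $C^{2/(p+1)}$ bound on $u_i$ upgrades pointwise to uniform convergence on every compact set (Arzel\`a–Ascoli), and $u_i(x_i)=0$, we may pass to the limit to obtain $u_\infty(x_\infty)=0$, i.e.\ $x_\infty\in\{u_\infty=0\}$.

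If $x_\infty$ lies in the open ball $B_1$, then $x_\infty\in\{u_\infty=0\}\cap B_1$ and the relation $|x_i-x_\infty|\to 0<\varepsilon_0$ directly contradicts the standing lower bound, completing the argument.

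The only subtlety, and the main (mild) obstacle, is the residual boundary case $x_\infty\in\partial B_1$, in which the limit point may fail to belong to $\{u_\infty=0\}\cap B_1$. This is handled by running the same argument on a slightly larger ball $B_{1+\delta}$ for some fixed small $\delta>0$: the entire convergence package listed at the start of Section~\ref{section dimension reduction} (uniform convergence, $L^1$-convergence of $u_i^{-p}$, strong $H^1$-convergence) holds on any compact subset of $\mathbb{R}^n$, so the conclusion applies with $B_{1+\delta}$ in place of $B_1$ and its restriction yields the statement on $B_1$. Thus the assumed subsequence cannot exist, and the lemma follows.
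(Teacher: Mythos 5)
Your core argument is correct and, in spirit, the same as the paper's: the paper argues directly that $u_i>0$ for $i$ large on any compact $\Omega'\subset\subset\{u_\infty>0\}\cap B_1$ (by uniform convergence and a positive lower bound on $u_\infty$ there), then takes $\Omega'$ to be $B_1$ minus the $\varepsilon$-neighborhood; you phrase the same fact as a compactness-and-contradiction argument. Either formulation is fine and neither needs the monotonicity machinery.

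However, the boundary ``fix'' you propose does not quite work as written. Running the argument on $B_{1+\delta}$ yields that $\{u_i=0\}\cap B_{1+\delta}$ is contained in the $\varepsilon$-neighborhood of $\{u_\infty=0\}\cap B_{1+\delta}$; restricting to $B_1$ then only shows $\{u_i=0\}\cap B_1$ is contained in the $\varepsilon$-neighborhood of $\{u_\infty=0\}\cap B_{1+\delta}$, which is in general a \emph{larger} set than the $\varepsilon$-neighborhood of $\{u_\infty=0\}\cap B_1$ (a zero of $u_\infty$ in the annulus $B_{1+\delta}\setminus B_1$ contributes to the former but not the latter). So the restriction does not deliver the literal statement. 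In fact the paper's own one-line proof is exposed to the same boundary phenomenon, and the lemma is best read with $\{u_\infty=0\}$ understood as the (closed) zero set, so that the complement of its $\varepsilon$-neighborhood within $\overline{B_1}$ is compact in $\{u_\infty>0\}$ and the argument closes without any boundary case at all. With that reading, your step~(5) is unnecessary and your proof is complete after the interior case; with the literal reading, your step~(5) needs to be replaced (e.g., by choosing $\delta<\varepsilon$ and then observing that a zero of $u_\infty$ within distance $\varepsilon$ of $x_i\in B_1$ automatically lies in $B_{1+\varepsilon}$, and then shrinking $\varepsilon$ appropriately — but this still gives the neighborhood of the global zero set, not the one intersected with the open ball).

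One small stylistic point: the uniform convergence of $u_i$ to $u_\infty$ on compact sets is already one of the recorded facts at the start of Section~\ref{section dimension reduction}, so there is no need to re-derive it from the $C^{2/(p+1)}$ bound via Arzel\`a--Ascoli.
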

\begin{proof}
This is because $u_i$ converges to $u_\infty$ uniformly in any
compact set $\Omega^\prime\subset\subset\{u_\infty>0\}\cap B_1$.
Thus for $i$ large, $u_i>0$ in $\Omega^\prime$.
\end{proof}

Next we would like to use the monotonicity formula to explore the
information of the limit $u_\infty$.
\begin{lem}\label{limit of E(r)}
The limit $\lim_{r\to0}E(r;0,u)$ exists and is finite.
\end{lem}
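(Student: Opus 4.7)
The plan is to use the monotonicity formula as the backbone: Theorem \ref{thm monotonicity formula} asserts that $r \mapsto E(r;0,u)$ is nondecreasing, so the limit as $r \to 0^+$ automatically exists in $[-\infty, E(R;0,u)]$ for any admissible $R$. The only real task is to rule out the value $-\infty$, i.e.\ to produce a uniform lower bound $E(r;0,u) \geq -C$ as $r \to 0$.

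To obtain this bound I would split $E$ into its interior piece
\[I(r) = r^{-n+2\frac{p-1}{p+1}} \int_{B_r} \Big(\tfrac{1}{2}|\nabla u|^2 - \tfrac{1}{p-1} u^{1-p}\Big)\]
and its boundary piece. For $I(r)$ one simply discards the nonnegative Dirichlet part and controls what remains by the growth estimate \eqref{4.1}, which gives $\int_{B_r} u^{1-p} \leq C r^{n - 2(p-1)/(p+1)}$; the power of $r$ in this estimate is exactly the one needed to cancel the scaling prefactor, so $I(r) \geq -C/(p-1)$ uniformly.

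For the boundary term I would exploit the standing assumption $u(0) = 0$ together with the H\"older bound \eqref{4.3}, which forces $|u| \leq C r^{2/(p+1)}$ on $\partial B_r$, hence $\int_{\partial B_r} u^2 \leq C r^{n-1 + 4/(p+1)}$. A short arithmetic check,
\[-n + 2\tfrac{p-1}{p+1} - 1 + (n-1) + \tfrac{4}{p+1} = -2 + \tfrac{2(p-1)+4}{p+1} = 0,\]
shows that the prefactor $r^{-n + 2(p-1)/(p+1) - 1}$ exactly absorbs this growth, so the boundary piece is $O(1)$ as well. Combining the two bounds yields $E(r;0,u) \geq -C$ for all small $r$, which is all that is needed.

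I do not anticipate a genuine obstacle here; the argument is essentially a scaling bookkeeping, and the fact that both the interior and the boundary contributions come out bounded is a manifestation of $2/(p+1)$ being the critical H\"older exponent of the problem. The only delicate point is that the hypothesis $u(0) = 0$ is essential: without the vanishing at the origin the boundary term would blow up at rate $r^{-4/(p+1)}$ and a different argument would be required.
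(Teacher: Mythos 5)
Your proof is correct and matches the paper's argument essentially line for line: invoke monotonicity of $E$ to reduce to a uniform lower bound, control the bulk $u^{1-p}$ term via the growth estimate \eqref{4.1} (Lemma~\ref{lem growth of energy}), and control the boundary term via $u(0)=0$ and the H\"older bound \eqref{4.3}, with the exponents canceling exactly. The closing observations about criticality of $2/(p+1)$ and the necessity of $u(0)=0$ are accurate but supplementary.
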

\begin{proof}
In view of the monotonicity of $E(r;0,u)$, we only need to show that
as $r\to0$,  $E(r;0,u)$ has a uniform lower bound.

By Lemma \ref{lem growth of energy}, for each $r\in(0,1)$,
\[r^{2\frac{p-1}{p+1}-n}\int_{B_r}|\nabla u|^2+u^{1-p}\leq C.\]
Next, by Theorem \ref{thm uniform Holder continuity},
$\sup_{B_r}u\leq Cr^{\frac{2}{p+1}}$. Thus
\[r^{2\frac{p-1}{p+1}-n-1}\int_{\partial B_r}u^2\leq Cr^{2\frac{p-1}{p+1}-n-1+n-1+\frac{4}{p+1}}=C.\]
Substituting these into the first formulation of $E(r;0,u)$, we get
\[E(r;0,u)\geq -C.\qedhere\]
\end{proof}
By \eqref{E^'}, for any $r\in(0,1)$,
\[E(1;0,u)-E(r;0,u)=c\int_{B_1\setminus B_r}|x|^{2\frac{p-1}{p+1}-n}\left(\frac{\partial u}{\partial
r}-\frac{2}{p+1}r^{-1}u\right)^2dx.\] Together with the previous
lemma we get
\begin{coro}
\[\int_{B_1}|x|^{2\frac{p-1}{p+1}-n}\left(\frac{\partial u}{\partial
r}-\frac{2}{p+1}|x|^{-1}u\right)^2dx<+\infty.\]
\end{coro}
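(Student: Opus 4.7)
The plan is to obtain the stated bound as an immediate consequence of integrating the monotonicity identity and invoking Lemma~\ref{limit of E(r)}. The identity displayed just above the corollary, namely
\[E(1;0,u)-E(r;0,u)=c\int_{B_1\setminus B_r}|x|^{2\frac{p-1}{p+1}-n}\left(\frac{\partial u}{\partial r}-\frac{2}{p+1}|x|^{-1}u\right)^2\,dx,\]
holds for every $r\in(0,1)$, so the only remaining task is to pass $r\to 0$ on both sides.

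First, I would observe that the integrand on the right is pointwise nonnegative and independent of $r$, while the domain of integration $B_1\setminus B_r$ increases monotonically to $B_1$ as $r\downarrow 0$. The monotone convergence theorem then identifies the limit of the right-hand side as
\[\int_{B_1}|x|^{2\frac{p-1}{p+1}-n}\left(\frac{\partial u}{\partial r}-\frac{2}{p+1}|x|^{-1}u\right)^2\,dx \in [0,+\infty].\]

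Second, Lemma~\ref{limit of E(r)} gives that $\lim_{r\to 0}E(r;0,u)$ exists and is finite, so the left-hand side converges to the finite number $E(1;0,u)-\lim_{r\to 0}E(r;0,u)$. Equating the two limits yields the asserted finiteness. I do not anticipate any real obstacle: the substantive work has already been carried out in Lemma~\ref{limit of E(r)}, where the lower bound on $E(r;0,u)$ as $r\to 0$ was extracted from the energy growth estimate of Lemma~\ref{lem growth of energy} together with the uniform H\"older bound $\sup_{B_r}u\le Cr^{2/(p+1)}$.
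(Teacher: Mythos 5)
Your proof is correct and coincides with the paper's (terse) argument: the paper simply states the integrated identity following \eqref{E^'} and remarks that, together with Lemma~\ref{limit of E(r)}, the corollary follows. You have merely made explicit the routine monotone-convergence step and the finiteness of $E(1;0,u)-\lim_{r\to 0}E(r;0,u)$.
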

\begin{lem}\label{lem blow up limit}
$u_\infty$ is a homogeneous solution of \eqref{equation} on
$\mathbb{R}^n$.
\end{lem}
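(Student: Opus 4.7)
The plan is to exploit the monotonicity formula (Theorem~\ref{thm monotonicity formula}) together with the scaling behaviour of $E(r;0,\cdot)$ under the blow-up. The key point is that the quantity $E(r;0,u)$ is scale-invariant for this equation: a direct substitution using the fact that the rescaling $u^\lambda(x) = \lambda^{-2/(p+1)} u(\lambda x)$ preserves the equation shows
\[
E(r;0,u^\lambda) = E(\lambda r; 0, u),
\]
since each term in $E$ is homogeneous of the correct degree. Combining this with Lemma~\ref{limit of E(r)}, for every fixed $r>0$,
\[
E(r;0,u_i) = E(\lambda_i r; 0, u) \xrightarrow[i\to\infty]{} E_0 := \lim_{s\to 0^+} E(s;0,u),
\]
and this limit is independent of $r$.

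Next I would pass to the limit on the left-hand side to identify this constant with $E(r;0,u_\infty)$. The volume integrals $\int_{B_r}|\nabla u_i|^2$ and $\int_{B_r}u_i^{1-p}$ converge to the corresponding quantities for $u_\infty$ by the strong $H^1_{loc}$ convergence and $L^1_{loc}$ convergence of $u_i^{1-p}$ already established in the list at the beginning of this section. For the boundary term $\int_{\partial B_r} u_i^2$, uniform convergence of $u_i$ to $u_\infty$ on compact sets of $\mathbb{R}^n$ yields pointwise (and indeed uniform) convergence of $u_i^2$ on the sphere $\partial B_r$, so $\int_{\partial B_r}u_i^2 \to \int_{\partial B_r}u_\infty^2$ for every $r>0$. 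Assembling the three terms gives $E(r;0,u_\infty) = E_0$ for every $r>0$.

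Thus $E(\,\cdot\,;0,u_\infty)$ is constant, so the equality case in Theorem~\ref{thm monotonicity formula} applies: $u_\infty$ is homogeneous of degree $2/(p+1)$ with respect to the origin, that is
\[
u_\infty(\lambda y) = \lambda^{\frac{2}{p+1}} u_\infty(y), \qquad y\in\mathbb{R}^n,\ \lambda>0.
\]
Moreover, $u_\infty$ is a stationary solution of \eqref{equation} on all of $\mathbb{R}^n$ by item (4) of the convergence list, which completes the proof.

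The step I expect to require the most care is the convergence of the volume term $\int_{B_r} u_i^{1-p}$. Pointwise convergence only holds a.e. (on $\{u_\infty>0\}$), so the passage to the limit really relies on the $L^1_{loc}$ statement proved earlier in the section via the Hausdorff measure estimate for $\{u_\infty=0\}$; without that, one could not conclude that $E(r;0,u_i)\to E(r;0,u_\infty)$. The other technical point is the invariance $E(r;0,u^\lambda)=E(\lambda r;0,u)$, which is routine but must match the scaling exponent $2(p-1)/(p+1)$ exactly, confirming that the monotonicity formula was indeed calibrated to the natural scaling of the equation.
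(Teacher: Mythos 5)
Your argument is correct and reaches the same conclusion, but it is a genuine variant of the paper's proof. You pass to the limit in each of the three terms of $E(r;0,u_i)$ (the $H^1$ term, the $u_i^{1-p}$ term, the boundary term) to establish $E(r;0,u_\infty)\equiv E_0$, and then invoke the equality case of Theorem~\ref{thm monotonicity formula} for $u_\infty$. The paper instead passes to the limit in the integral of the radial defect
\[
|x|^{2\frac{p-1}{p+1}-n}\left(\frac{\partial u}{\partial r}-\frac{2}{p+1}|x|^{-1}u\right)^2
\]
over annuli $B_{\eta^{-1}}\setminus B_\eta$: by the scale invariance of this integrand and the corollary following Lemma~\ref{limit of E(r)}, the annulus integral for $u_i=u^{\lambda_i}$ equals that for $u$ over the shrinking annulus $B_{\eta^{-1}\lambda_i}\setminus B_{\eta\lambda_i}$, which tends to zero. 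This produces the ODE $\partial_r u_\infty=\frac{2}{p+1}r^{-1}u_\infty$ directly, which is then integrated. The trade-offs: the paper's limit passage only needs the strong $H^1_{loc}$ convergence, and it never has to assert that the monotonicity formula (and its rigidity clause) applies to the weak stationary limit $u_\infty$; your route additionally uses the $L^1_{loc}$ convergence of $u_i^{1-p}$ (you correctly flag this as the delicate point) and the uniform convergence on spheres, and it does rely on Theorem~\ref{thm monotonicity formula} holding for $u_\infty$ itself — which the paper does assume elsewhere (e.g.\ in Lemma~\ref{lem homogeneous solutions}), so this is consistent with the paper's framework, just a slightly heavier set of hypotheses at this particular step.
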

\begin{proof}
By the strong convergence of $u_i$ in $H^1_{loc}(\mathbb{R}^n)$, for
any $\eta\in(0,1)$,
\begin{eqnarray*}
&&\int_{B_{\eta^{-1}}\setminus
B_\eta}|x|^{2\frac{p-1}{p+1}-n}\left(\frac{\partial
u_\infty}{\partial r}-\frac{2}{p+1}r^{-1}u_\infty\right)^2dx\\
&=&\lim_{i\to+\infty}\int_{B_{\eta^{-1}}\setminus
B_\eta}|x|^{2\frac{p-1}{p+1}-n}\left(\frac{\partial
u_i}{\partial r}-\frac{2}{p+1}|x|^{-1}u_i\right)^2dx\\
&=&\lim_{i\to+\infty}\int_{B_{\eta^{-1}\lambda_i}\setminus
B_{\eta\lambda_i}}|x|^{2\frac{p-1}{p+1}-n}\left(\frac{\partial
u}{\partial r}-\frac{2}{p+1}|x|^{-1}u\right)^2dx\\
&=&0.
\end{eqnarray*}
The last one is guaranteed by the previous corollary.

This means for a.a. $x\in\mathbb{R}^n$,
\[\frac{\partial
u_\infty}{\partial r}-\frac{2}{p+1}r^{-1}u_\infty=0.\] Integrating
this in $r$, we get
\[u_\infty(x)=|x|^{\frac{2}{p+1}}u_\infty(\frac{x}{|x|}).\qedhere\]
\end{proof}

Define the density function (it may take value $-\infty$)
\[\Theta(x;u):=\lim_{r\to 0}E(r;x,u).\]
We have the following characterization of rupture points.
\begin{lem}
$x\in\{u>0\}$ if and only if $\Theta(x)=-\infty$.
\end{lem}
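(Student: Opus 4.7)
\emph{Proof plan.} I would prove the two directions separately by direct asymptotic analysis of
\begin{align*}
E(r;x,u)=r^{-n+2\frac{p-1}{p+1}}\int_{B_r(x)}\Bigl(\tfrac{1}{2}|\nabla u|^2-\tfrac{1}{p-1}u^{1-p}\Bigr)-\frac{r^{-n+2\frac{p-1}{p+1}-1}}{p+1}\int_{\partial B_r(x)}u^2,
\end{align*}
combining the growth estimates of Section \ref{section some tools} with the monotonicity formula.

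For the implication $u(x)=0\Rightarrow\Theta(x;u)>-\infty$ (the contrapositive of the ``if'' direction): after translating so that the reference point becomes $0$, this is essentially Lemma \ref{limit of E(r)}. The growth estimate of Lemma \ref{lem growth of energy} gives $r^{2(p-1)/(p+1)-n}\int_{B_r(x)}(|\nabla u|^2+u^{1-p})\le C$, while the H\"older continuity together with $u(x)=0$ gives $\sup_{B_r(x)}u\le Cr^{2/(p+1)}$, hence $r^{-n+2(p-1)/(p+1)-1}\int_{\partial B_r(x)}u^2\le C$. Together these yield a uniform lower bound $E(r;x,u)\ge-C$, and the monotonicity of Theorem \ref{thm monotonicity formula} then forces $\lim_{r\to0}E(r;x,u)$ to exist as a finite number.

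For the implication $u(x)>0\Rightarrow\Theta(x;u)=-\infty$: writing $a:=u(x)>0$, continuity of $u$ gives $u\ge a/2$ on some $B_\rho(x)$, where $u^{-p}$ is smooth, and by standard elliptic bootstrap $u\in C^\infty(B_\rho(x))$. Then $|\nabla u|^2$ and $u^{1-p}$ are bounded on $B_r(x)$ for $r<\rho$, so the volume term of $E(r;x,u)$ is $O(r^{2(p-1)/(p+1)})$ and stays bounded as $r\to0$. On the other hand, continuity at $x$ gives $\int_{\partial B_r(x)}u^2=a^2\omega_{n-1}r^{n-1}(1+o(1))$, and since
\[
(n-1)+\Bigl(-n+\tfrac{2(p-1)}{p+1}-1\Bigr)=-\tfrac{4}{p+1}<0,
\]
the surface term contributes $-\tfrac{a^2\omega_{n-1}}{p+1}r^{-4/(p+1)}(1+o(1))\to-\infty$ and dominates the volume term. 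Hence $E(r;x,u)\to-\infty$.

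The only delicate point is the exponent arithmetic in the second direction: at a rupture point the scaling of the surface term is exactly compatible with the volume term, but at a regular point $u^2$ fails to vanish to the expected order, producing the divergent factor $r^{-4/(p+1)}$ that forces $\Theta(x;u)=-\infty$. The first direction is a translated restatement of Lemma \ref{limit of E(r)}, with monotonicity used only to promote the uniform lower bound to a genuine limit.
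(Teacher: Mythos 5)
Your proof is correct and follows essentially the same route as the paper: for $u(x)=0$ you invoke the argument of Lemma~\ref{limit of E(r)} to get a uniform lower bound on $E(r;x,u)$, and for $u(x)>0$ you observe that the volume term of $E$ is $O(r^{2(p-1)/(p+1)})\to 0$ while the boundary term scales like $-c\,r^{-4/(p+1)}\to -\infty$. The exponent arithmetic $(n-1)+(-n+\tfrac{2(p-1)}{p+1}-1)=-\tfrac{4}{p+1}$ is exactly the observation the paper uses (in the form $2\tfrac{p-1}{p+1}-2$), so there is nothing missing.
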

\begin{proof}
If $u(x)=2h>0$, by the continuity of $u$, $u>h$ in a ball
$B_{r_0}(x)$ and it is smooth here. Hence for $r<r_0$,
\[r^{2\frac{p-1}{p+1}-n}\int_{B_r(x)}|\nabla u|^2+u^{1-p}\leq Cr^{2\frac{p-1}{p+1}},\]
which goes to $0$ as $r\to 0$.

On the other hand,
\[r^{2\frac{p-1}{p+1}-n-1}\int_{\partial B_r(x)}u^2\geq h^2r^{2\frac{p-1}{p+1}-2},\]
which goes to $+\infty$ as $r\to0$. Substituting these into the
first formulation of $E(r;x,u)$ we get
\[\lim_{r\to0}E(r;x,u)=-\infty.\]

If $u(x)=0$, the same proof of Lemma \ref{limit of E(r)} gives
\[\Theta(x;u)=\lim_{r\to0}E(r;x,u)\geq-C.\qedhere\]
\end{proof}

\begin{lem}
$\Theta(x;u)$ is upper semi-continuous in $x$.
\end{lem}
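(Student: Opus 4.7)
The plan is to write $\Theta(x;u)$ as an infimum of functions that are continuous in $x$, so that upper semi-continuity is immediate. By Theorem~\ref{thm monotonicity formula}, for every fixed $x$ the map $r\mapsto E(r;x,u)$ is nondecreasing on its range of admissible radii, hence
$$\Theta(x;u) \;=\; \lim_{r\to 0^+} E(r;x,u) \;=\; \inf_{\,0<r<\mathrm{dist}(x,\partial B_2)} E(r;x,u).$$

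The core step is to show that for each fixed $r_0$, the map $x\mapsto E(r_0;x,u)$ is continuous on the open set $\{x : \overline{B_{r_0}(x)} \subset B_2\}$. Writing
$$E(r_0;x,u) \;=\; r_0^{-n+2\frac{p-1}{p+1}}\!\int_{B_{r_0}(x)}\!\Bigl(\tfrac{1}{2}|\nabla u|^2 - \tfrac{1}{p-1}u^{1-p}\Bigr) \;-\; \tfrac{r_0^{-n+2\frac{p-1}{p+1}-1}}{p+1}\!\int_{\partial B_{r_0}(x)}\! u^2,$$
the two interior integrals are continuous in $x$ by dominated convergence, using $|\nabla u|^2, u^{1-p}\in L^1_{\mathrm{loc}}(B_2)$ together with the pointwise a.e.\ convergence $\mathbf{1}_{B_{r_0}(x_n)}\to\mathbf{1}_{B_{r_0}(x)}$. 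For the boundary term I change variables,
$$\int_{\partial B_{r_0}(x)} u^2\,d\sigma \;=\; \int_{\partial B_{r_0}(0)} u(x+y)^2\,d\sigma(y),$$
and pass to the limit $x\to x_0$ using the continuity of $u$ supplied by the Hölder estimate \eqref{4.3}.

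Once $E(r_0;\cdot,u)$ is continuous for each admissible $r_0$, upper semi-continuity of $\Theta(\cdot;u)$ is automatic: for any $x_0$ and any admissible $r_0$,
$$\limsup_{x\to x_0}\Theta(x;u) \;\leq\; \limsup_{x\to x_0} E(r_0;x,u) \;=\; E(r_0;x_0,u),$$
and taking the infimum over $r_0$ yields $\limsup_{x\to x_0}\Theta(x;u)\leq \Theta(x_0;u)$. At points where $\Theta(x_0;u) = -\infty$ (i.e.\ $x_0\in\{u>0\}$, by the preceding lemma), the inequality is trivial, since $\{u>0\}$ is open and $\Theta\equiv -\infty$ on it.

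The main technical point is the continuity of the spherical integral $\int_{\partial B_{r_0}(x)}u^2$; the interior integrals go through routinely by dominated convergence, but the boundary integral genuinely needs the continuity of $u$ (not merely $L^2$ control), which is precisely what the Hölder estimate guarantees. Beyond that, the argument is the standard trick of writing the density as an infimum of continuous functions, made possible by the monotonicity formula.
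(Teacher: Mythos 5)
Your argument is correct and is essentially the paper's proof: write $\Theta(\cdot;u)$ as the decreasing limit (infimum over $r$) of the functions $E(r;\cdot,u)$, show each of these is continuous in $x$, and conclude upper semi-continuity. You supply more detail than the paper's one-line justification, and you rightly flag that the boundary integral $\int_{\partial B_r(x)}u^2$ needs the continuity of $u$ (from the H\"older estimate) rather than mere $L^2$ control, a point the paper glosses over.
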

\begin{proof}
Because $u\in H^1(B_2)$ and $u^{1-p}\in L^1(B_2)$, by the first
formulation of $E(r;x,u)$, $E(r;x,u)$ is a continuous function of
$x$. Then since $\Theta(x)$ is the decreasing limit of this family
of continuous functions, it is upper semi-continuous in $x$.
\end{proof}
\begin{lem}\label{lem homogeneous solutions}
Let $u$ be a homogeneous stationary solution of \eqref{equation} on
$\mathbb{R}^n$, satisfying estimates \eqref{4.1}-\eqref{4.4} for all
balls $B_r(x)$. Then for any $x\neq 0$,
$\Theta(x,u)\leq\Theta(0,u)$. Moreover, if
$\Theta(x,u)=\Theta(0,u)$, $u$ is translation invariant in the
direction $x$, i.e. for all $t\in\mathbb{R}$,
\[u(tx+\cdot)=u(\cdot)~~a.e. ~~\text{in}~~\mathbb{R}^n.\]
\end{lem}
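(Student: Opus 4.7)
The plan is to run the classical Federer--Almgren style dimension reduction argument, based on the monotonicity formula of Theorem~\ref{thm monotonicity formula} and its rigidity clause. The starting observation is that since $u$ is homogeneous of degree $\alpha := 2/(p+1)$ about the origin, the rescalings $u_\lambda(y) := \lambda^{-\alpha} u(\lambda y)$ coincide with $u$ itself for every $\lambda > 0$; scale invariance of $E$ therefore forces $r \mapsto E(r; 0, u)$ to be constant on $(0, \infty)$, and in particular $\Theta(0, u) = E(r; 0, u)$ for every $r > 0$.

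To prove $\Theta(x, u) \leq \Theta(0, u)$, I would identify $\lim_{R \to \infty} E(R; x, u)$ via a blow-down at $x$. Setting $v_R(y) := R^{-\alpha} u(x + Ry)$, a direct computation shows each $v_R$ is a stationary solution of \eqref{equation}, satisfies the estimates \eqref{4.1}--\eqref{4.4} on balls of arbitrarily large radius, and obeys $E(R; x, u) = E(1; 0, v_R)$. The homogeneity of $u$ about $0$ gives the pointwise identity
\[
v_R(y) = \bigl|y + x/R\bigr|^{\alpha}\, u\!\left(\frac{y + x/R}{|y + x/R|}\right),
\]
which converges to $|y|^\alpha u(y/|y|) = u(y)$ as $R \to \infty$ for every $y \neq 0$; the uniform $\alpha$-Hölder bound enjoyed by all $v_R$ (inherited from $u$ by the scale-invariance of the $\alpha$-Hölder seminorm) promotes this to uniform convergence on compact subsets of $\R^n$. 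The convergence results established in Section~\ref{section convergence} then apply to the family $\{v_R\}$ and yield strong $H^1(B_1)$ convergence to $u$ together with $L^1(B_1)$ convergence of $v_R^{1-p}$ to $u^{1-p}$. Passing to the limit in the first expression for $E(1; 0, v_R)$ displayed in Theorem~\ref{thm monotonicity formula} yields $\lim_{R \to \infty} E(R; x, u) = E(1; 0, u) = \Theta(0, u)$, and combining with $\Theta(x, u) \leq E(R; x, u)$ (by monotonicity) finishes the inequality.

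For the equality case, suppose $\Theta(x, u) = \Theta(0, u)$. Then $r \mapsto E(r; x, u)$ is monotone nondecreasing on $(0, \infty)$ with both $\lim_{r \to 0^+}$ and $\lim_{r \to \infty}$ equal to $\Theta(0, u)$, hence identically equal to $\Theta(0, u)$. The rigidity clause of Theorem~\ref{thm monotonicity formula} then forces $u$ to be homogeneous of degree $\alpha$ about $x$ as well. Combining the two homogeneities, for every $\lambda > 0$ and $y \in \R^n$,
\[
u\bigl(y + (1 - \lambda) x\bigr) = u\bigl(x + \lambda(y/\lambda - x)\bigr) = \lambda^{\alpha}\, u(y/\lambda) = u(y).
\]
Setting $t = 1 - \lambda$ gives $u(\,\cdot\, + tx) = u(\,\cdot\,)$ for every $t < 1$, and iterating (so that $u(y + (t_1 + \cdots + t_k) x) = u(y)$ whenever each $t_i < 1$) extends the invariance to all $t \in \R$.

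The main technical obstacle is the passage to the limit in $E(1; 0, v_R)$: the bulk integral is handled by the Section~\ref{section convergence} convergence machinery, while the boundary integral $\int_{\partial B_1} v_R^2$ requires trace-level control. This last piece follows from the uniform $\alpha$-Hölder bound of $v_R$ combined with locally uniform convergence to $u$, which gives $v_R \to u$ uniformly on $\partial B_1$. Everything else is formal once this convergence is in place.
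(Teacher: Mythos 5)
Your proof is correct and follows essentially the same route as the paper's: blow down at the base point $x$, use the homogeneity of $u$ about the origin to identify the blow-down limit with $u$ itself, deduce $\lim_{R\to\infty}E(R;x,u)=\Theta(0,u)$ via the convergence machinery of Section~\ref{section convergence}, conclude $\Theta(x,u)\le\Theta(0,u)$ by monotonicity, and in the equality case invoke the rigidity clause of the monotonicity formula to get homogeneity about $x$ as well. The only cosmetic difference is in the final step: the paper derives translation invariance by sending $\lambda\to\infty$ in $u(x_0+\cdot)=u(\lambda^{-1}x_0+\cdot)$ and then extends to all $t$ by scaling $\Theta(tx_0;u)=\Theta(x_0;u)$, whereas you combine the two homogeneities algebraically and then iterate; both are sound.
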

\begin{proof}
With the help of the estimates \eqref{4.1}-\eqref{4.4}, similar to
the proof of Lemma \ref{limit of E(r)}, for any
$x_0\in\mathbb{R}^n$, there exists a constant $C$ such that
\[\lim\limits_{r\to+\infty}E(r;x_0,u)\leq C.\]
And we can define the blowing down sequence with respect to the base
point $x_0$,
\[u_\lambda(x)=\lambda^{\frac{4}{p-1}}u(x_0+\lambda x)\ \ \ \ \lambda\to+\infty.\]
Since $u$ is homogeneous with respect to $0$,
\[u_\lambda(x)=u(\lambda^{-1}x_0+x),\]
which converges to $u(x)$ as $\lambda\to+\infty$ uniformly in any
compact set of $\mathbb{R}^n$. $u_\lambda$ also converges strongly
in $H^1_{loc}(\mathbb{R}^n)$, $u_\lambda^{1-p}$ and $u_\lambda^{-p}$
converges in $L^1_{loc}(\mathbb{R}^{n})$. Then by the homogeneity of
$u$ and these convergence, we see
\begin{eqnarray*}
\Theta(0;u)=E(1;0,u)&=&\lim\limits_{\lambda\to+\infty}E(1;0,u_\lambda)\\
&=&\lim\limits_{\lambda\to+\infty}E(\lambda;x_0,u)\\
&\geq&\Theta(x_0;u).
\end{eqnarray*}
Moreover, if $\Theta(x_0;u)=\Theta(0,u)$, the above inequality
become an equality:
\[\lim\limits_{\lambda\to+\infty}E(\lambda;x_0,u)=\Theta(x_0;u).\]
This then implies that $E(\lambda;x_0,u)\equiv\Theta(x_0;u)$ for all
$\lambda>0$. By \eqref{E^'}, $u$ is homogeneous with respect to
$x_0$. Then for all $\lambda>0$,
\[u(x_0+x)=\lambda^{\frac{4}{p-1}}u(x_0+\lambda x)=u(\lambda^{-1} x_0+x).\]
By letting $\lambda\to+\infty$ and noting that
$u(\lambda^{-1}x_0+\cdot)$ are uniformly bounded in
$C^{\frac{2}{p+1}}_{loc}(\mathbb{R}^n)$, we see
\[u(x_0+\cdot)=u(\cdot)~~\text{on}~~\mathbb{R}^n.\]

Because $u$ is homogeneous with respect to $0$, a direct scaling
shows that $\Theta(tx_0;u)=\Theta(x_0;u)$ for all $t>0$, so the
above equality still holds if we replace $x_0$ by $tx_0$ for any
$t>0$. A change of variable shows this also holds for $t<0$.
\end{proof}
To finish the proof of Theorem \ref{thm dimension estiamte of
rupture set}, we also need
\begin{lem}\label{lem homogeneous solutions in dim 2}
Let $u$ be a $2/(p+1)-$H\"{o}lder continuous, homogeneous solution
of \eqref{equation} in $\mathbb{R}^2$. Then $\{u=0\}=\{0\}$.
\end{lem}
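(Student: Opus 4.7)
The plan is to reduce the statement to an ODE analysis on the unit circle and exploit a conserved energy. Working in polar coordinates $(r,\theta)$ on $\mathbb{R}^2$ and setting $\alpha=2/(p+1)$, the homogeneity $u(\lambda x)=\lambda^{\alpha}u(x)$ gives $u(r,\theta)=r^\alpha\phi(\theta)$ with $\phi:=u(1,\cdot)\geq 0$ continuous on $S^1$. Since $u$ is smooth on the open set $\{u>0\}$ by standard interior regularity for $\Delta u=u^{-p}$ (the nonlinearity is smooth where $u>0$), the profile $\phi$ is smooth on the open set $\{\phi>0\}\subset S^1$. A direct computation yields $\Delta u=r^{\alpha-2}[\phi''+\alpha^2\phi]$; the exponent $\alpha=2/(p+1)$ is precisely what matches the right-hand side $u^{-p}=r^{-p\alpha}\phi^{-p}$, and so on $\{\phi>0\}$ the profile satisfies the ODE
\[
\phi''(\theta)+\alpha^2\phi(\theta)=\phi(\theta)^{-p}.
\]

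I would then run the standard energy argument. Because $u$ is a solution of \eqref{equation}, it is not identically zero, hence neither is $\phi$. Suppose for contradiction that $\phi$ vanishes at some point. Then $\{\phi>0\}$ is a proper open subset of $S^1$ and possesses a connected component $I=(\theta_0,\theta_1)$ (understood modulo $2\pi$) with $\phi(\theta_0)=0$. Multiplying the ODE by $\phi'$ and integrating on any compact subinterval of $I$ produces the conservation law
\[
E(\theta):=\frac{1}{2}(\phi'(\theta))^2+\frac{1}{p-1}\phi(\theta)^{1-p}+\frac{\alpha^2}{2}\phi(\theta)^2\equiv \text{const.\ on }I.
\]

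Letting $\theta\to\theta_0^+$, continuity forces $\phi(\theta)\to 0^+$, so since $p>1$ the middle term $\phi(\theta)^{1-p}/(p-1)$ tends to $+\infty$ while the other two contributions to $E(\theta)$ are nonnegative. This makes the constant value of $E$ on $I$ infinite, which is absurd. Hence $\phi$ cannot vanish, so $\phi>0$ on $S^1$ and consequently $\{u=0\}=\{0\}$.

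The only mildly delicate point I anticipate is the verification that $\phi$ is a genuine classical $C^2$ solution of the derived ODE on $\{\phi>0\}$ so that the energy identity holds rigorously; this should be routine via interior elliptic regularity applied to $u$ on the open set $\{u>0\}$, where the right-hand side $u^{-p}$ is smooth.
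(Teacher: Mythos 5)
Your proof is correct, but it takes a genuinely different route from the one in the paper. The paper's argument is an integrability one: writing $u(r,\theta)=r^{2/(p+1)}\varphi(\theta)$, it notes that
\[
\int_{B_1}u^{-p}=\Big(\int_0^1 r^{\frac{1-p}{p+1}}\,dr\Big)\Big(\int_{\mathbb{S}^1}\varphi(\theta)^{-p}\,d\theta\Big)<\infty
\]
by the definition of a distributional solution ($u^{-p}\in L^1_{loc}$), and since $\frac{1-p}{p+1}>-1$ the radial factor converges, forcing $\varphi^{-p}\in L^1(\mathbb{S}^1)$. Then, if $\varphi(\theta_0)=0$, the $\frac{2}{p+1}$-H\"older bound gives $\varphi(\theta)\leq C|\theta-\theta_0|^{2/(p+1)}$, so $\varphi^{-p}\gtrsim|\theta-\theta_0|^{-2p/(p+1)}$ near $\theta_0$, which is non-integrable because $\frac{2p}{p+1}>1$. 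You instead reduce to the autonomous ODE $\phi''+\alpha^2\phi=\phi^{-p}$ on $\{\phi>0\}$ and use the conserved energy $E=\tfrac12(\phi')^2+\tfrac{\alpha^2}{2}\phi^2+\tfrac{1}{p-1}\phi^{1-p}$, deriving a contradiction from $E$ being a finite constant on a component of positivity yet diverging at an endpoint where $\phi\to 0^+$. The paper's route is shorter, needs no interior elliptic regularity on $\{u>0\}$, and makes essential use of the precise H\"older exponent $\frac{2}{p+1}$; your route uses only continuity of $\phi$ (the exponent $\frac{2}{p+1}$ enters only via the homogeneity degree that makes the ODE autonomous), is more structural, and would transfer to other nonlinearities with a conserved profile energy, at the cost of the regularity bootstrap you correctly flag as needed, which is indeed routine.
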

Here we only need the solution to be understood in the
distributional sense, i.e. $u^{-p}\in L^1_{loc}(\mathbb{R}^2)$.
\begin{proof}
There exists a function $\varphi(\theta)\in
C^{\frac{2}{p+1}}(\mathbb{S}^1)$ such that in the polar coordinates,
\[u(r,\theta)=r^{\frac{2}{p+1}}\varphi(\theta).\]
Then
\[
\int_{B_1}u^{-p}=\int_0^1\left(\int_{\mathbb{S}^1}\varphi(\theta)^{-p}d\theta\right)r^{-\frac{2p}{p+1}+1}dr
<+\infty.\]
 So
\[\int_{\mathbb{S}^1}\varphi(\theta)^{-p}d\theta<+\infty.\]
If there exists a $\theta_0\in\mathbb{S}^1$ such that
$\varphi(\theta_0)=0$, then
\[|\varphi(\theta)-\varphi(\theta_0)|\leq C|\theta-\theta_0|^{\frac{2}{p+1}}.\]
Hence near $\theta_0$, $\varphi^{-p}$ grows like
$|\theta-\theta_0|^{-\frac{2p}{p+1}}$. Since $\frac{2p}{p+1}>1$,
$\varphi^{-p}$ cannot be in $L^1(\mathbb{S}^1)$. This is a
contradiction and we must have $\varphi>0$ on $\mathbb{S}^1$.
\end{proof}
\begin{rmk}
Similar arguments show that there does not exist homogeneous
solutions in $\mathbb{R}^1$.
\end{rmk}

With these lemmas in hand we can apply the Federer's dimension
reduction principle (cf. Appendix A in \cite{S}) to deduce Theorem
\ref{thm dimension estiamte of rupture set}. For completeness we
present the proof in the case of $n=2$.

Assume there exists $x_i\in\{u=0\}\cap B_1$, such that $x_i\to x_0$
but $x_i\neq x_0$. Take $r_i=|x-x_i|$ and define
\[u_i(x)=r_i^{-\frac{2}{p+1}}u(x_0+r_i x).\]
After passing to a subsequence of $i$, we can assume that $u_i$
converges uniformly to a $2/(p+1)-$H\"{o}lder continuous,
homogeneous solution $u_\infty$ in any compact set of
$\mathbb{R}^2$. Since $z_i=(x_i-x_0)/r_i\in\mathbb{S}^1$, we can
also assume that $z_i\to z_\infty\in\mathbb{S}^1$. By the uniform
convergence of $u_i$,
\[u_\infty(z_\infty)=\lim_{i\to+\infty}u_i(z_i)=0.\]
However, Lemma \ref{lem homogeneous solutions in dim 2} says
$u_\infty>0$ outside the origin. This is a contradiction and
$\{u=0\}\cap B_1$ must be a discrete set.

\appendix
\section{A Liouville theorem}
\setcounter{equation}{0}

In this appendix we recall a Liouville theorem proved in \cite{NTTV}.
\begin{thm}\label{Liouville theorem}
Let $\alpha\in(0,1)$. Assume $\bar{u}_\infty\geq0$ is a globally
$C^\alpha(\R^n)$ continuous function, satisfying
\begin{equation}\label{limit equation}
\bar{u}_\infty\Delta\bar{u}_\infty=0 \quad\text{in }\R^n,
\end{equation}
and that $\bar{u}_\infty$
is stationary,
i.e.\[\int_{\R^n}\frac{1}{2}|\nabla\bar{u}_\infty|^2\mbox{div}Y
-DY(\nabla\bar{u}_\infty,\nabla\bar{u}_\infty)=0,\] for any vector
field $Y\in C_0^\infty(\mathbb{R}^n,\mathbb{R}^n)$. Then
$\bar{u}_\infty$ is constant.
\end{thm}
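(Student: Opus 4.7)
The plan is to combine the stationarity condition (via an Almgren-type frequency formula) with the global $C^\alpha$ growth constraint ($\alpha<1$) to force any nontrivial blow-up at a zero to be a homogeneous solution of positive degree less than $1$, and then rule out such homogeneous solutions by a Hopf-lemma argument on the sphere.

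\emph{Key identities and monotonicity.} Because $\bar u_\infty\in H^1_{loc}\cap C(\R^n)$ and $\bar u_\infty\Delta\bar u_\infty=0$ holds distributionally, the chain rule yields $\Delta(\bar u_\infty^2)=2|\nabla\bar u_\infty|^2\ge 0$, so $\bar u_\infty^2$ is subharmonic and the integration by parts $\int_{B_r}|\nabla\bar u_\infty|^2=\int_{\partial B_r}\bar u_\infty\,\partial_r\bar u_\infty$ is valid. Plugging radial vector fields $Y(x)=\phi(|x|)x$ into the stationarity condition produces the Pohozaev identity relating $D(r):=\int_{B_r}|\nabla\bar u_\infty|^2$ to $\int_{\partial B_r}|\nabla\bar u_\infty|^2$ and $\int_{\partial B_r}(\partial_r\bar u_\infty)^2$. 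Setting $H(r):=\int_{\partial B_r}\bar u_\infty^2$ and $N(r):=rD(r)/H(r)$, these identities give $H'(r)=(n-1)H(r)/r+2D(r)$ together with an analogous expression for $D'(r)$; Cauchy--Schwarz then yields $N'(r)\ge 0$. The global H\"older bound forces $H(r)\le Cr^{n-1+2\alpha}$, and integrating $(\log H)'(r)=(n-1)/r+2N(r)/r$ together with monotonicity of $N$ gives $H(r)\ge H(R_0)(r/R_0)^{n-1+2N(R_0)}$; letting $r\to\infty$ yields $N(R_0)\le\alpha<1$ for every $R_0>0$.

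\emph{Blow-up and classification of limits.} If $\bar u_\infty$ is strictly positive it is harmonic on all of $\R^n$ with sublinear growth, hence constant by the classical Liouville theorem. Otherwise, fix $x_0$ with $\bar u_\infty(x_0)=0$ and set $\beta:=\lim_{r\to 0^+}N(r;x_0)\in(0,\alpha]$. Using the uniform $C^\alpha$ bound for compactness and the monotonicity formula for strong $H^1_{loc}$ convergence, the rescalings $u_\lambda(x)=\lambda^{-\beta}\bar u_\infty(x_0+\lambda x)$ converge (up to subsequence) to a nontrivial, nonnegative, continuous, homogeneous, stationary solution $u^\star$ of $u\Delta u=0$ of degree $\beta$. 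Writing $u^\star(r,\omega)=r^\beta\varphi(\omega)$ with $\varphi\ge 0$ on $\mathbb{S}^{n-1}$, the equation $\Delta u^\star=0$ on $\{u^\star>0\}$ forces $\varphi$ to be a first Dirichlet eigenfunction of $-\Delta_{\mathbb{S}^{n-1}}$ on $\{\varphi>0\}$ with eigenvalue $\beta(\beta+n-2)$, while the stationarity of $u^\star$ translates to the free-boundary condition $|\nabla_{\mathbb{S}^{n-1}}\varphi|=0$ on $\partial\{\varphi>0\}$. By Hopf's lemma, this is only possible when $\{\varphi>0\}=\mathbb{S}^{n-1}$, which then requires $\beta=0$, contradicting $\beta>0$.

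\emph{Main obstacle.} The delicate point is the final step: extracting compactness and strong $H^1_{loc}$ convergence for the blow-up sequence so that one can pass to the limit in both the equation and the stationarity condition, identifying the correct free-boundary condition $|\nabla_{\mathbb{S}^{n-1}}\varphi|=0$ for $\varphi$ from the stationarity of $u^\star$, and invoking the Hopf-type conclusion in a setting where $\partial\{\varphi>0\}$ is a priori only a closed subset of $\mathbb{S}^{n-1}$. The Almgren monotonicity is the mechanism that keeps the blow-ups bounded in the correct scale and guarantees that the limit is genuinely homogeneous rather than degenerate.
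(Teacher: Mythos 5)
Your frequency-function setup and the reduction to a homogeneous blow-up $u^\star=r^\beta\varphi(\omega)$ with $0<\beta\le\alpha<1$ tracks the paper's monotonicity machinery closely, and the ``$\bar u_\infty>0$ everywhere'' case is handled identically. But the final classification step has a genuine error, not merely a regularity issue.

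The claim that stationarity of $u^\star$ produces the one-phase free boundary condition $|\nabla_{\mathbb{S}^{n-1}}\varphi|=0$ on $\partial\{\varphi>0\}$ is false in general. Stationarity under inner variations gives, across a smooth piece of $\partial\{u^\star>0\}$, the \emph{balance} condition $\bigl[|\nabla u^\star|^2\bigr]=0$, where $[\cdot]$ is the jump between the two sides. This only degenerates to $|\nabla u^\star|=0$ when the zero set has nonempty interior (one-sided free boundary). When $\{u^\star=0\}$ is thin — exactly what happens for $u^\star(x)=c|x_1|$ — the two one-sided traces of $|\nabla u^\star|$ are equal and nonzero, and the condition is satisfied without forcing the gradient to vanish. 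One can check directly that $c|x_1|$ is a stationary, homogeneous, $C^1$-away-from-the-axis solution of $u\Delta u=0$; on $\mathbb{S}^1$ it gives $\varphi(\theta)=|\cos\theta|$ with $|\varphi'|=1$ at the two boundary points, so your Hopf argument would falsely exclude it. It is only the quantitative bound $\beta\le\alpha<1$ (degree strictly below one) that excludes $c|x_1|$ — not a gradient vanishing condition. As written, your proof would ``prove'' Liouville with no visible use of $\alpha<1$ in the classification step, which is the tell that something is off. Two further gaps: you assert $\beta>0$ without justification (this requires the doubling estimate $H(r;x_0)\ge cr^{2d}$ combined with $H(r;x_0)\le Cr^{2\alpha}$ to force $N(0^+;x_0)\ge\alpha$, as in the paper), and the Hopf argument presumes $\partial\{\varphi>0\}$ is regular enough to apply it, which is not established.

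The paper's route avoids the free boundary condition entirely. It shows that at every zero one has $N(r;x)\equiv\alpha$, hence $\bar u_\infty$ is homogeneous of degree $\alpha$ about each of its zeros; this forces $\{\bar u_\infty=0\}$ to be a cone about each of its points and therefore a linear subspace $\R^k$. If $k\le n-2$ the zero set has zero capacity, so $\bar u_\infty$ is globally harmonic and nonnegative and touches zero, contradicting the strong maximum principle. If $k=n-1$, Schwarz reflection gives $\bar u_\infty=c|x_1|$, excluded precisely because $\alpha<1$. If you want to keep a spherical-eigenvalue picture, you must first establish the linear-subspace structure of the zero set (or an equivalent dimension/capacity dichotomy) before you can argue about $\varphi$ at all; the Hopf-type shortcut does not close the gap.
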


Equation \eqref{limit equation} implies that
\begin{equation}\label{limit equation 2}
\Delta\bar{u}_\infty^2=2|\nabla\bar{u}_\infty|^2 ,
\end{equation}
in the distributional sense.
Moreover, $\bar{u}_\infty$ is harmonic in the open set
$\{\bar{u}_\infty>0\}$. So if $\bar{u}_\infty>0$ everywhere, it is a
harmonic function on $\mathbb{R}^n$. Then because $\bar{u}_\infty$
is globally $C^\alpha$, the standard Liouville theorem implies that
it is constant.

In the following we assume $\{\bar{u}_\infty=0\}\neq\emptyset$.
First we present some monotonicity formulas.
\begin{prop}\label{monotonocity 1}
For $r>0$ and $x\in\mathbb{R}^n$,
$$D(r;x):=r^{2-n}\int_{B_r(x)}|\nabla \bar{u}_\infty|^2$$
is nondecreasing in $r$.
\end{prop}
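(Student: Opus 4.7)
The plan is to exploit the stationarity hypothesis with a family of radial test vector fields $Y(y) = \phi(|y-x|)(y-x)$, pass to the sharp cutoff $\phi \to \chi_{[0,r]}$ to obtain a Pohozaev-type boundary identity, and then differentiate $D(r;x)$ in $r$. By translation invariance I may take $x=0$ and write $r=|y|$. For such a $Y$, a direct calculation gives $\mbox{div}\,Y = n\phi(r) + r\phi'(r)$ and $DY(\nabla\bar u_\infty,\nabla\bar u_\infty) = \phi(r)|\nabla\bar u_\infty|^2 + r\phi'(r)(\partial_r\bar u_\infty)^2$, where $\partial_r\bar u_\infty = y\cdot\nabla\bar u_\infty/r$.

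Substituting into the stationarity identity of Theorem \ref{Liouville theorem} yields
\[
\int_{\R^n}\left[\frac{n-2}{2}\phi(r)|\nabla\bar u_\infty|^2 + \frac{r\phi'(r)}{2}|\nabla\bar u_\infty|^2 - r\phi'(r)(\partial_r\bar u_\infty)^2\right]\,dy = 0.
\]
Next, I would approximate $\chi_{[0,R]}$ by smooth cutoffs $\phi_\varepsilon$ with $\phi_\varepsilon' \to -\delta_R$. Because $|\nabla\bar u_\infty|^2 \in L^1_{\text{loc}}$, Fubini ensures that the slice integrals $\int_{\partial B_s}|\nabla\bar u_\infty|^2$ and $\int_{\partial B_s}(\partial_r\bar u_\infty)^2$ are finite at a.e.\ $s>0$, and taking $\varepsilon\to 0$ at such an $R$ produces the boundary identity
\[
(n-2)\int_{B_R}|\nabla\bar u_\infty|^2 = R\int_{\partial B_R}|\nabla\bar u_\infty|^2 - 2R\int_{\partial B_R}(\partial_r\bar u_\infty)^2.
\]

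Differentiating the definition of $D(R;0) = R^{2-n}\int_{B_R}|\nabla\bar u_\infty|^2$ and substituting will then give, for a.e.\ $R>0$,
\[
\frac{d}{dR}D(R;0) = R^{1-n}\!\left[(2-n)\!\int_{B_R}|\nabla\bar u_\infty|^2 + R\!\int_{\partial B_R}|\nabla\bar u_\infty|^2\right] = 2R^{2-n}\!\int_{\partial B_R}(\partial_r\bar u_\infty)^2 \geq 0,
\]
proving the monotonicity. The main technical point I expect to deal with is justifying the passage to the sharp cutoff: the boundary integrals are only defined at a.e.\ $R$ because $\bar u_\infty\in H^1_{\text{loc}}$, but since $R\mapsto D(R;0)$ is absolutely continuous (its derivative being built from an $L^1_{\text{loc}}$ density), a.e.\ nonnegativity of $D'$ is enough to conclude that $D(\cdot;0)$ is nondecreasing on all of $(0,\infty)$.
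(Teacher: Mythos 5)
Your proposal is correct and follows essentially the same route as the paper: both derive the Pohozaev-type identity $(n-2)\int_{B_r}|\nabla\bar u_\infty|^2 = r\int_{\partial B_r}\bigl[|\nabla\bar u_\infty|^2 - 2(\partial_r\bar u_\infty)^2\bigr]$ from the stationarity condition with the radial vector field $Y=\phi(|y-x|)(y-x)$, and then differentiate $D$ to obtain $D'(r;x)=2r^{2-n}\int_{\partial B_r(x)}(\partial_r\bar u_\infty)^2\ge 0$. The paper simply cites Caffarelli--Lin for this identity, while you carry out the cutoff computation and note the a.e.\ subtlety explicitly; your observation that absolute continuity of $D$ upgrades a.e.\ nonnegativity of $D'$ to genuine monotonicity is the right way to close the argument.
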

\begin{proof}
For a proof, see \cite{C-L 2} Lemma 2.1. In fact by the stationary
condition, we have
\[(n-2)\int_{B_r(x)}|\nabla\bar{u}_\infty|^2=r\int_{\partial B_r(x)}|\nabla\bar{u}_\infty|^2
-2(\frac{\partial\bar{u}_\infty}{\partial r})^2.\] Then direct
calculations give
\begin{equation}\label{A.1}
\frac{d}{dr}D(r;x)=2r^{2-n}\int_{\partial
B_r(x)}\left(\frac{\partial\bar{u}_\infty}{\partial
r}\right)^2\geq0.
\end{equation}
\end{proof}
Next let $H(r;x):=r^{1-n}\int_{\partial B_r}\bar{u}_\infty^2$. By
\eqref{limit equation 2}, direct calculations give
\begin{eqnarray}\label{A8}
\frac{dH}{dr}=2r^{1-n}\int_{\partial
B_r}\bar{u}_\infty\frac{\partial\bar{u}_\infty}{\partial
r}&=&2r^{1-n}\int_{
B_r}\bar{u}_\infty\Delta\bar{u}_\infty\\\nonumber
&=&\frac{2}{r}D(r).
\end{eqnarray}
Then we get
\begin{prop}\label{monotonocity}
{\bf (Almgren monotonicity formula.)} For $r>0$ and
$x\in\mathbb{R}^n$,
$$N(r;x):=\frac{D(r;x)}{H(r;x)}$$
is nondecreasing in $r$. Moreover, if $N(r;x)\equiv d$, then
\[\bar{u}_\infty(x+ry)=r^d\bar{u}_\infty(x+y).\]
\end{prop}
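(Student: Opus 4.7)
My plan follows the classical Almgren monotonicity argument. The essential step is to derive a sharp lower bound on $D'(r;x)$ via Cauchy--Schwarz, combine it with the simple formula for $H'(r;x)$ already computed in \eqref{A8}, and verify that $N'(r;x)\geq 0$.

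I would begin by recalling the two differentiation formulas already in hand: by \eqref{A.1},
\[
D'(r;x)=2r^{2-n}\int_{\partial B_r(x)}\Bigl(\frac{\partial\bar u_\infty}{\partial r}\Bigr)^{2},
\]
and by \eqref{A8},
\[
H'(r;x)=\frac{2D(r;x)}{r}.
\]
The next step is to rewrite $D(r;x)$ as a boundary integral using \eqref{limit equation 2}. Integrating $\Delta\bar u_\infty^{2}=2|\nabla\bar u_\infty|^{2}$ over $B_r(x)$ and applying Green's identity (valid since $\bar u_\infty^{2}\in H^1_{\mathrm{loc}}$ and $\Delta\bar u_\infty^{2}\in L^1_{\mathrm{loc}}$) gives
\[
\int_{B_r(x)}|\nabla\bar u_\infty|^{2}=\int_{\partial B_r(x)}\bar u_\infty\,\frac{\partial\bar u_\infty}{\partial r},
\]
so that $D(r;x)=r^{2-n}\int_{\partial B_r(x)}\bar u_\infty\,\partial_r\bar u_\infty$.

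The key inequality is then just Cauchy--Schwarz on $\partial B_r(x)$:
\[
D(r;x)^{2}\leq r^{4-2n}\Bigl(\int_{\partial B_r(x)}\bar u_\infty^{2}\Bigr)\Bigl(\int_{\partial B_r(x)}(\partial_r\bar u_\infty)^{2}\Bigr)=\frac{r}{2}\,H(r;x)\,D'(r;x).
\]
Combining this with the formula for $H'(r;x)$,
\[
N'(r;x)=\frac{D'H-DH'}{H^{2}}=\frac{D'H-2D^{2}/r}{H^{2}}\geq 0,
\]
which gives the monotonicity. For the rigidity assertion, constancy $N(r;x)\equiv d$ forces equality in Cauchy--Schwarz on every sphere $\partial B_r(x)$, i.e.\ $\partial_r\bar u_\infty=c(r)\bar u_\infty$ for some scalar function $c(r)$. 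Integrating this ODE along each ray from $x$ yields $\bar u_\infty(x+r\omega)=f(r)\bar u_\infty(x+\omega)$ with $f(1)=1$; plugging back into the identity $N(r;x)\equiv d$ forces $rf'(r)/f(r)=d$, hence $f(r)=r^{d}$, which gives the claimed homogeneity $\bar u_\infty(x+ry)=r^{d}\bar u_\infty(x+y)$.

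The main obstacle is justifying the integration by parts and the pointwise ODE on spheres at the available regularity ($\bar u_\infty\in C^{\alpha}\cap H^{1}_{\mathrm{loc}}$, with $\bar u_\infty\Delta\bar u_\infty=0$ only in the distributional sense). The Green identity step for $\bar u_\infty^{2}$ works for a.e.\ $r>0$ since $\nabla\bar u_\infty^{2}=2\bar u_\infty\nabla\bar u_\infty\in L^{2}_{\mathrm{loc}}$ and $\Delta\bar u_\infty^{2}\in L^{1}_{\mathrm{loc}}$; likewise, $\partial_r\bar u_\infty$ is well-defined on a.e.\ sphere, so the Cauchy--Schwarz step and the equality characterization are legitimate after restricting to such a full-measure set of radii, which suffices because monotonicity on a dense set extends to all $r>0$ by continuity of $D$ and $H$.
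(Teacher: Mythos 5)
Your proof is correct and follows essentially the same route as the paper: same differentiation identities for $D$, $H$, the same Green's-identity rewriting $D(r;x)=r^{2-n}\int_{\partial B_r(x)}\bar u_\infty\,\partial_r\bar u_\infty$ (equivalent to \eqref{A8}), the same Cauchy--Schwarz step (the paper just writes the numerator of $N'$ as a single Cauchy--Schwarz expression rather than isolating $D^2\le\frac{r}{2}HD'$), and the same rigidity argument via equality in Cauchy--Schwarz and integration of the radial ODE. Your added remark about carrying out the spherical integrations for a.e.\ $r$ and then extending by continuity of $D$ and $H$ is a reasonable gloss on a point the paper treats as a ``direct calculation.''
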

\begin{proof}
Without loss of generality, take $x=0$.
\begin{eqnarray*}
\frac{d}{dr}N(r) &=&\frac{H(r)\left[2r^{2-n}\int_{\partial
B_r}\left(\frac{\partial\bar{u}_\infty}{\partial
r}\right)^2\right]-D(r)
\left(2r^{1-n}\int_{\partial B_r} \bar{u}_\infty\frac{\partial\bar{u}_\infty}{\partial r}\right)}{H(r)^2}\\
&=&2r^{3-2n}\frac{\int_{\partial B_r}\bar{u}_\infty^2\int_{\partial
B_r}\left(\frac{\partial\bar{u}_\infty}{\partial r}\right)^2-
\left(\int_{\partial B_r} \bar{u}_\infty\frac{\partial\bar{u}_\infty}{\partial r}\right)^2}{H(r)^2}\\
&\geq&0.
\end{eqnarray*}
If $N(r)\equiv d$, for any $r$,
\[\int_{\partial B_r}\bar{u}_\infty^2\int_{\partial
B_r}\left(\frac{\partial\bar{u}_\infty}{\partial r}\right)^2-
\left(\int_{\partial B_r}
\bar{u}_\infty\frac{\partial\bar{u}_\infty}{\partial
r}\right)^2=0.\] By the characterization of the equality case of the
Cauchy inequality,  there exists a $\lambda(r)$ such that
\[\frac{\partial\bar{u}_\infty}{\partial
r}=\lambda(r)\bar{u}_\infty.\] Integrating in $r$ we get a function
$\varphi(r)$ such that
\[\bar{u}_\infty(y)=\varphi(|y|)\bar{u}_\infty(\frac{y}{|y|}).\]
Then a direct calculation shows that $\varphi(|y|)=|y|^d$.
\end{proof}
\begin{prop}
If $N(r_0;x)\geq d$, then for $r>r_0$,
$$r^{1-n-2d}\int_{\partial B_r(x)}\bar{u}_\infty^2$$
is nondecreasing in $r$.
\end{prop}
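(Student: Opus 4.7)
The plan is to reduce this monotonicity to the two facts already established in the preceding propositions: the differential identity $H'(r) = \frac{2}{r}D(r)$ from \eqref{A8}, and the monotonicity of the Almgren frequency $N(r;x) = D(r;x)/H(r;x)$ from Proposition \ref{monotonocity}. Once these are combined, the result follows by a direct logarithmic differentiation argument.

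Without loss of generality I would fix $x$ and drop it from the notation. First I would observe that since $N(r)$ is nondecreasing in $r$, the hypothesis $N(r_0) \geq d$ propagates to $N(r) \geq d$ for every $r \geq r_0$. Next, assuming $H(r) > 0$ on $(r_0, \infty)$ (which I would verify separately, noting that if $H$ vanished at some radius then $\bar u_\infty$ would vanish on a whole sphere, and combined with harmonicity on $\{\bar u_\infty > 0\}$ and the unique continuation of harmonic functions one can argue that $\bar u_\infty \equiv 0$ in the exterior region, making the claim trivial), I would compute
\begin{equation*}
\frac{d}{dr}\log\bigl(r^{-2d}H(r)\bigr) = \frac{H'(r)}{H(r)} - \frac{2d}{r} = \frac{2D(r)}{rH(r)} - \frac{2d}{r} = \frac{2}{r}\bigl(N(r) - d\bigr) \geq 0.
\end{equation*}
Exponentiating, $r^{-2d}H(r)$ is nondecreasing on $(r_0, \infty)$, which after substituting $H(r) = r^{1-n}\int_{\partial B_r}\bar u_\infty^2$ gives exactly the claimed monotonicity of $r^{1-n-2d}\int_{\partial B_r(x)} \bar u_\infty^2$.

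The main potential obstacle is the degenerate case where $H(r)$ vanishes, since then $N(r)$ is not defined and the logarithmic derivative manipulation fails. I would handle this by treating it separately: if $H(r_1) = 0$ for some $r_1 > r_0$, then $\bar u_\infty \equiv 0$ on $\partial B_{r_1}(x)$, and since the quantity in question is nonnegative and starts at zero there, monotonicity on $[r_1, \infty)$ reduces to showing it remains zero — which follows from unique continuation or from the fact that $H'(r) = (2/r)D(r) \geq 0$ forces $H$ to stay at zero on any interval where it already vanishes only if one can rule out $H$ becoming positive later; this last point deserves a brief verification but is standard in Almgren-type arguments. Aside from this bookkeeping, the proof is a two-line computation once \eqref{A8} and the monotonicity of $N$ are in hand.
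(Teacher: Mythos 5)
Your computation is exactly the paper's: differentiate $r^{-2d}H(r)$, substitute $H'(r)=\tfrac{2}{r}D(r)$ from \eqref{A8}, and use $N(r)\geq d$ for $r\geq r_0$ (from the Almgren monotonicity of Proposition \ref{monotonocity}), obtaining
\[
\frac{d}{dr}\bigl(r^{-2d}H(r)\bigr)=\frac{2}{r}\,r^{-2d}\,H(r)\bigl(N(r)-d\bigr)\geq 0 .
\]
(The paper's displayed computation contains an apparent typo carried over from a two-component system --- the $u,v$ and $u^2v^2$ terms --- but the intended argument is the one you give.) One small simplification: the degenerate case you worry about cannot occur and needs no separate treatment. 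The hypothesis $N(r_0;x)\geq d$ presupposes $H(r_0)>0$, and since $H'(r)=\tfrac{2}{r}D(r)\geq 0$ the function $H$ is itself nondecreasing, so $H(r)\geq H(r_0)>0$ for all $r\geq r_0$; no appeal to unique continuation is needed, and the logarithmic-derivative manipulation is justified throughout $(r_0,\infty)$.
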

\begin{proof}
Direct calculation using \eqref{A8} shows
\begin{eqnarray*}
&&\frac{d}{dr}\left(r^{1-n-2d}\int_{\partial B_r(x)}u^2+v^2\right)\\
&=&
-2dr^{-n-2d}\int_{\partial B_r(x)}(u^2+v^2)+2r^{1-n-2d}\int_{B_r(x)}|\nabla u|^2+|\nabla v|^2+2u^2v^2\\
&\geq& 0.
\end{eqnarray*}
Here we have used Proposition \ref{monotonocity}, in particular, the
fact that $N(r)\geq d$ for every $r\geq r_0$.
\end{proof}
Because $\bar{u}_\infty$ is globally $C^{\alpha}$,
\[\bar{u}_\infty(x)\leq C(1+|x|^{\alpha})\ \ \mbox{in}\ \mathbb{R}^n.\]
Hence for any $x$ and $r$ large,
$$\int_{\partial B_r(x)}\bar{u}_\infty^2\leq Cr^{n-1+2\alpha}.$$
Combining this with the previous proposition we get
\begin{equation}\label{A.001}
N(r;x)\leq\alpha,\ \ \mbox{for any}\ r>0,x\in\mathbb{R}^n.
\end{equation}

The next result is the so called ``doubling property".
\begin{prop}\label{doubling property}
Let $x\in\{\bar{u}_\infty=0\}$ and $R>0$ such that $N(R;x)\leq d$,
then for every $0<r\leq R$
\begin{equation}\label{eq:h_monotone}
H(r;x)\geq H(R;x)r^{2d}.
\end{equation}
\end{prop}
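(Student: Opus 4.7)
The proof plan is to combine the Almgren-type monotonicity formula from Proposition \ref{monotonocity} with the first-order ODE for $H(r;x)$ derived in \eqref{A8}. That identity says $\frac{dH}{dr}=\frac{2}{r}D(r;x)$, so wherever $H(r;x)>0$ we may rewrite it in terms of $N=D/H$ as the logarithmic derivative identity
$$\frac{d}{dr}\log H(r;x)=\frac{2\,N(r;x)}{r}.$$

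Next I would exploit the monotonicity of $N$. Since $N(\cdot;x)$ is nondecreasing by Proposition \ref{monotonocity} and $N(R;x)\leq d$ by hypothesis, we have $N(s;x)\leq d$ for every $s\in(0,R]$. Integrating the displayed identity from $r$ to $R$ yields
$$\log\frac{H(R;x)}{H(r;x)}=\int_r^R\frac{2\,N(s;x)}{s}\,ds\leq 2d\log\frac{R}{r},$$
and exponentiating gives $H(r;x)\geq H(R;x)(r/R)^{2d}$, which is the doubling inequality \eqref{eq:h_monotone} up to the standard normalization.

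The only subtlety is the possibility that $H$ vanishes somewhere in $(0,R]$, in which case both $N$ and the logarithmic derivative are ill-defined on that set. But if $H(s_0;x)=0$ for some $s_0\in(0,R]$, then $\bar{u}_\infty\equiv 0$ on $\partial B_{s_0}(x)$; combined with harmonicity of $\bar{u}_\infty$ in $\{\bar{u}_\infty>0\}$ and the maximum principle, this forces $\bar{u}_\infty\equiv 0$ on the whole ball $B_{s_0}(x)$, so that both sides of \eqref{eq:h_monotone} vanish for every $r\leq s_0$ and the inequality is trivial. Thus one may restrict attention to the range where $H>0$, where the integration is valid. Conceptually, the only other thing to verify is that the differentiation in \eqref{A8} is legitimate even though $\bar{u}_\infty^2$ has only a distributional Laplacian — but this is precisely the content of \eqref{limit equation 2} and has already been used in Proposition \ref{monotonocity}, so no new difficulty arises.
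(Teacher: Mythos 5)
Your main computation is the same as the paper's: differentiate $\log H$ via \eqref{A8}, bound $N(s;x)\leq d$ for $s\leq R$ by Almgren monotonicity, and integrate from $r$ to $R$ (the paper phrases the result as ``$r^{-2d}H(r;x)$ is non-increasing,'' which is equivalent). Your normalization remark is also apt: the integration genuinely yields $H(r;x)\geq H(R;x)(r/R)^{2d}$.

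The handling of the degenerate case, however, contains a real error. You claim that if $H(s_0;x)=0$ for some $s_0\in(0,R]$ then ``both sides of \eqref{eq:h_monotone} vanish for every $r\leq s_0$.'' The left side does vanish --- your maximum-principle argument correctly shows $\bar u_\infty\equiv0$ on $B_{s_0}(x)$ --- but the right side is $H(R;x)\,r^{2d}$, which is strictly positive whenever $H(R;x)>0$ and $r>0$; it vanishes only in the trivial sub-case $s_0=R$. So if the situation ``$H(s_0;x)=0$ for some $s_0<R$ while $H(R;x)>0$'' could actually arise, the asserted inequality would in fact be \emph{false} on $(0,s_0]$, and your argument would have proved nothing. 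The correct closing step --- and this is exactly what the paper's sentence ``Consequently, $H(r)>0$ for all $r\in(0,R)$'' is recording --- is that this situation is impossible: on the maximal interval $(s^*,R]$ on which $H>0$ your integration already yields $H(r;x)\geq H(R;x)(r/R)^{2d}$, and if $s^*>0$ then letting $r\to s^{*+}$ and using continuity of $H$ gives $0=H(s^*;x)\geq H(R;x)(s^*/R)^{2d}>0$, a contradiction. Hence $H>0$ on all of $(0,R]$ whenever $H(R;x)>0$, and the only remaining case $H(R;x)=0$ is trivial since the right-hand side of \eqref{eq:h_monotone} is then identically zero, with no maximum principle needed.
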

\begin{proof}
By \eqref{A8}, if $H(r)>0$,
\[\frac{d}{dr}\log H(r)=\frac{2N(r)}{r}\leq \frac{2d}{r}.\]
This means $r^{-2d}H(r)$ is non-increasing in $r$. Consequently,
$H(r)>0$ for all $r\in(0,R)$, and \eqref{eq:h_monotone} is a direct
consequence of the monotonicity of $r^{-2d}H(r)$.
\end{proof}
\begin{rmk}
By this doubling property, we can prove that $\{\bar{u}_\infty=0\}$
has zero Lebesgue measure. In fact, more properties such as the unique
continuation property can be proved by this method, see \cite{C-L
2}.
\end{rmk}

By this doubling proeprty, if $N(R;x)\leq d<\alpha$, then for all
$r\in(0,R)$,
\[H(r;x)\geq Cr^{2d}.\]
However, if $\bar{u}_\infty(x)=0$, because $\bar{u}_\infty$ is
$C^{\alpha}$ continuous,
\[H(r;x)\leq Cr^{2\alpha}.\]
If $r$ small, this is a contradiction. In other words, $N(r;x)\geq
\alpha$ for any $r>0$.

Combining this fact with \eqref{A.001}, we see for any
$x\in\{\bar{u}_\infty=0\}$ and $r>0$, $N(r;x)\equiv\alpha$. By
Proposition \ref{monotonocity},
\[\bar{u}_\infty(x+y)=|y|^{\frac{2}{p+1}}\bar{u}_\infty(x+\frac{y}{|y|}).\]
In particular, $\{\bar{u}_\infty=0\}$ is a cone with respect to any
point in $\{\bar{u}_\infty=0\}$. This then implies that
$\{\bar{u}_\infty=0\}$ is a linear subspace of $\mathbb{R}^n$.
Assume $\{\bar{u}_\infty=0\}=\mathbb{R}^k$ for some $k<n$. (Note
that $\bar{u}_\infty$ is nontrivial, so $\{\bar{u}_\infty=0\}$
cannot be the whole $\mathbb{R}^n$.) If $k\leq n-2$,
$\{\bar{u}_\infty=0\}$ has zero capacity and then $\bar{u}_\infty$
is a harmonic function. Because $\bar{u}_\infty\geq0$, by the strong
maximum principle, either $\bar{u}_\infty>0$ everywhere or
$\bar{u}_\infty\equiv0$. Both of these two lead to a contradiction.

 If
$k=n-1$, assume $\{\bar{u}_\infty=0\}=\{x_1=0\}$. Then by the
Schwarz reflection principle, $\bar{u}_\infty=c|x_1|$ for some
constant $c>0$. This again contradicts the global
$\alpha-$H\"{o}lder continuity of $\bar{u}_\infty$ because
$\alpha<1$.

\end{document}